\tikzset{
  symbol/.style={
    draw=none,
    every to/.append style={
      edge node={node [sloped, allow upside down, auto=false]{$#1$}}
    },
  },
}
\renewcommand{\setminus}{{\smallsetminus}}
\numberwithin{equation}{section}
\theoremstyle{plain}
\newtheorem{theorem}[equation]{Theorem}
\newtheorem{lemma}[equation]{Lemma}
\newtheorem{corollary}[equation]{Corollary}
\newtheorem*{question}{Question}
\newtheorem*{cor:m-doesnotdivide-n}{Corollary~\ref{cor:m-doesnotdivide-n}}
\newtheorem*{cor:examples}{Corollary~\ref{cor:examples}}
\newtheorem*{new}{Theorem~\ref{new}}
\newtheorem*{new1}{Theorem~\ref{new1}}
\newtheorem*{theorem1}{Lemma~\ref{theorem1}}
\theoremstyle{definition}
\newtheorem{example}[equation]{Example}
\newtheorem{definition}[equation]{Definition}
\newtheorem{remark}[equation]{Remark}
\theoremstyle{remark}
\newtheorem*{claim}{Claim}
\newtheorem{const}[equation]{Construction}
\numberwithin{equation}{section}
\title{Diffeotopy groups of non-compact 4-manifolds}
\author[Isacco Nonino]{Isacco Nonino}
\address{University of Glasgow, Glasgow, UK}
\email{Isacco.Nonino@glasgow.ac.uk}
\begin{document}

\maketitle
\begin{abstract}
We provide information on diffeotopy groups of exotic smoothings of punctured 4-manifolds, extending previous results on  diffeotopy groups of exotic $\mathbb{R}^4$'s. In particular, we prove that for a smoothable 4-manifold $M$ and for a non-empty, discrete set of points $S \subsetneq \mathring{M}$, there are uncountably many distinct smoothings of $M\setminus S$ whose diffeotopy groups are uncountable. 

We then prove that for a smoothable 4-manifold $M$ and for a non-empty, discrete set of points $S \subsetneq \mathring{M}$, there exists a smoothing of $M\setminus S$ whose diffeotopy groups have similar properties as $\mathcal{R}_U$, Freedman and Taylor's universal $\mathbb{R}^4$.

Moreover, we prove that if $M$ is non-smoothable, both results still hold under the assumption that $|S| \ge 2$.
\end{abstract}
\section{Introduction}
One of the most striking results in 4-dimensional topology is the existence of \textit{uncountably many} different smooth structures on $\mathbb{R}^4$ (see for example \cite{Taubes1987GaugeTO,anexoticmanagerie,gompf-stip}). This wild behavior was later also detected in several other classes of non-compact topological 4-manifolds (see for example \cite{bizaka-entnyre,gompf-stip}). Up to now, it is still an open question whether \emph{all} non-compact topological 4-manifolds admit uncountably many distinct smoothings. In particular, punctured 4-manifolds (i.e, manifolds with points removed)  behave as wildly as $\mathbb{R}^4$. This is not surprising: one way to view $\mathbb{R}^4$ is as the 4-sphere without a point. 

Although during the years a lot was proved about the exotic structures on non-compact 4-manifolds, little was known until recently on how these structures behaved in terms of self-diffeomorphism groups.

Diffeomorphism groups are normally hard to compute. A useful approximation is given by the \emph{mapping class groups}. They consist of diffeomorphisms of a manifold $M$ to itself modulo isotopy. In this paper we will refer to them as \emph{diffeotopy groups}, following the terminology used in \cite{groupactions}. Given a smooth manifold $M$, we denote the diffeotopy group by $\mathcal{D}(M)$. In other words, elements of $\mathcal{D}(M)$ are smooth isotopy classes of self-diffeomorphisms of $M$. 
If $M$ is non-compact, we will also consider \emph{self-diffeomorphisms at infinity}. Roughly speaking they are defined in terms of smooth, proper embeddings of codimension-0 submanifolds whose complement in $M$ has compact closure. They are \emph{isotopic} if they are properly isotopic as proper embeddings. We denote the \emph{diffeotopy group at infinity} of $M$ by $\mathcal{D}^{\infty}(M)$. 

Each self-diffeomorphim $f$ of $M$ defines a diffeomorphism at infinity simply by restriction. We obtain a homomorphism $r\colon \mathcal{D}(M) \to \mathcal{D}^{\infty}(M)$ by sending diffeomorphisms to their corresponding equivalence classes at infinity. If a diffeomorphism at infinity lies in the image of $r$ we say that it \emph{extends} over $M$.

Even if mapping class groups are simpler to analyze than diffeomorphism groups, they still are quite obscure objects.

As an example, we do not know how to compute $\mathcal{D}^{\infty}(\mathbb{R}^4)$ yet (this is in fact related, as explained in \cite{groupactions}, to the \emph{smooth Schoenflies conjecture} in dimension 4, which is notoriously still open).

In a recently published paper, Gompf approached the question, proving that many (in fact, \textit{uncountably} many) smoothings of $\mathbb{R}^4$ have an \textit{uncountable} diffeotopy group $\mathcal{D}(\mathcal{R})$ \cite[Theorem 4.4]{groupactions}. The same holds for their diffeotopy group at infinity $\mathcal{D}^{\infty}(\mathcal{R})$. 

He also proved an interesting result for $\mathcal{R}_U$, Freedman and Taylor's universal smoothing of $\mathbb{R}^4$ \cite{universal}. Though nothing is known about $\mathcal{D}(\mathcal{R}_U)$ and $\mathcal{D}^{\infty}(\mathcal{R}_U)$ by themselves, we now know that the group homomorphism $r\colon \mathcal{D}(\mathcal{R}_U) \to \mathcal{D}^{\infty}(\mathcal{R}_U)$ is surjective \cite[Theorem 5.1]{anexoticmanagerie}. Equivalently, every diffeomorphism at infinity of $\mathcal{R}_U$ extends over $\mathcal{R}_U$. We will refer to this extension property as \emph{universal behavior}.\footnote{ Some care is required: the precise meaning of \emph{universal behavior} depends on the complexity at infinity of the non-compact 4-manifold we are working with. We will discuss this in Section 4.}

As the early results on exotic  $\mathbb{R}^4$'s were used to distinguish between smooth structures on other non-compact 4-manifolds, we will use Gompf's work as the starting point for a general discussion on diffeotopy groups of non-compact 4-manifolds. This possibility is mentioned in \cite[Section 4]{groupactions} and carried out in \cite[Theorem 4.10]{groupactions} for the interior of manifolds having a specific embedding property. However in this paper we follow a different strategy.

We will focus our attention on punctured 4-manifolds.
In Section 3 and Section 4 we will prove the following two theorems:
\begin{new}
Let $M$ be a topological 4-manifold and $ S \subsetneq \mathring{M}$ a non-empty discrete set of points. Then $M\setminus S$ admits uncountably many smoothings whose diffeotopy groups $\mathcal{D}^{\infty}(M\setminus S)$ and $\mathcal{D}(M\setminus S)$ are uncountable if:
\begin{enumerate}
    \item $M$ is smoothable or
    \item $M$ is non-smoothable and $|S| \ge 2$.
\end{enumerate}
\end{new}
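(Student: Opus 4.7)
The plan is to promote Gompf's uncountable-family and uncountable-$\mathcal{D}(\mathcal{R})$ results for exotic $\mathbb{R}^4$'s to $M\setminus S$ by performing an end-sum at a single puncture $p \in S$. The guiding picture is that any punctured point in a topological 4-manifold contributes an end homeomorphic to $S^3\times[0,\infty)$, which is exactly the end of $\mathbb{R}^4$, so Gompf's exotica can be ``grafted'' onto $M\setminus S$ there.

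First I would fix a background smoothing $\Sigma_0$ of $M\setminus S$. In case (1) this is just the restriction of any smoothing of $M$. In case (2) one uses that $M\setminus S$ is an open topological 4-manifold: since $|S|\geq 2$ one can verify that the Kirby--Siebenmann obstruction of $M\setminus S$ vanishes (the point to be careful about is exactly why one puncture is not enough when $M$ is non-smoothable: one needs the existence of a smoothing of $M\setminus S$ that is ``standard at the puncture'' in a sense that would otherwise contradict non-smoothability of $M$ when $|S|=1$; at least two punctures give the necessary flexibility to arrange a smoothing compatible with the end-sum construction below). Next I would choose a collar neighborhood of the end at $p$, identified with $\mathbb{R}^4\setminus\overline{B^4}$, and for each exotic $\mathbb{R}^4$ $\mathcal{R}_\alpha$ in Gompf's uncountable family I would form the end-sum $\Sigma_\alpha := \Sigma_0 \,\natural_\infty\, \mathcal{R}_\alpha$ along that end. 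This yields a family $\{\Sigma_\alpha\}$ of smoothings of $M\setminus S$, all topologically identical but smoothly distinguished by the germ at $p$.

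To separate the smoothings $\Sigma_\alpha$, I would invoke an end-invariant such as the Taylor genus or the Gompf end-invariants at the puncture $p$, which are inherited from those of $\mathcal{R}_\alpha$ by the local nature of the construction. Thinning Gompf's family to an uncountable sub-family $\{\mathcal{R}_\alpha\}_{\alpha \in A}$ realising distinct values of this invariant gives uncountably many pairwise non-diffeomorphic $(M\setminus S, \Sigma_\alpha)$. For the diffeotopy groups, I would take compactly supported self-diffeomorphisms $\phi$ of $\mathcal{R}_\alpha$ realising the uncountability of $\mathcal{D}(\mathcal{R}_\alpha)$ and $\mathcal{D}^{\infty}(\mathcal{R}_\alpha)$ provided by \cite[Theorem 4.4]{groupactions}, and extend them by the identity on the complement of the grafted region to self-diffeomorphisms $\widetilde{\phi}$ of $(M\setminus S, \Sigma_\alpha)$. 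This produces homomorphisms $\mathcal{D}(\mathcal{R}_\alpha)\to \mathcal{D}(M\setminus S,\Sigma_\alpha)$ and $\mathcal{D}^{\infty}(\mathcal{R}_\alpha)\to \mathcal{D}^{\infty}(M\setminus S,\Sigma_\alpha)$.

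The main obstacle, and the step I would devote most care to, is proving that these extension maps are \emph{injective}: two diffeomorphisms of $\mathcal{R}_\alpha$ that are smoothly non-isotopic a priori could become isotopic once we are allowed to use the extra room outside the graft region. The standard strategy I would try is to show that any isotopy in $(M\setminus S,\Sigma_\alpha)$ between $\widetilde{\phi}_0$ and $\widetilde{\phi}_1$ can be deformed (using a suitable engulfing / collar-uniqueness argument inside the end near $p$) to an isotopy supported in the $\mathcal{R}_\alpha$-graft, which would then restrict to an isotopy in $\mathcal{R}_\alpha$ and contradict Gompf's non-isotopy. Combined with the distinct smoothings produced above, this will deliver uncountably many smoothings of $M\setminus S$ each with uncountable $\mathcal{D}$ and $\mathcal{D}^{\infty}$. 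Finally I would verify that the argument is insensitive to whether $M$ is smoothable, except for the single point in case (2) where one must spend one of the punctures to secure the existence of $\Sigma_0$, explaining the hypothesis $|S|\geq 2$.
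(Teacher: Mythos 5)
Your high-level architecture matches the paper's: graft a Gompf exotic $\mathbb{R}^4$ at one puncture, extend its self-diffeomorphisms by the identity over the rest of $M\setminus S$, prove the resulting map on diffeotopy groups is injective, vary the summand to get uncountably many smoothings, and in case (2) spend one puncture on Quinn's smoothability theorem (your closing sentence captures the reason for $|S|\ge 2$ more accurately than your Kirby--Siebenmann remark; the paper simply applies case (1) to $M':=M\setminus p$, $S':=S\setminus p$, which forces $S'\neq\emptyset$). However, there are two genuine gaps. First, you propose to use \emph{compactly supported} self-diffeomorphisms of $\mathcal{R}_\alpha$ ``realising the uncountability of $\mathcal{D}^{\infty}(\mathcal{R}_\alpha)$.'' This is contradictory: a compactly supported diffeomorphism is the identity near infinity and hence trivial in $\mathcal{D}^{\infty}$, and Gompf's generators are emphatically not compactly supported --- they permute an infinite multiray $\Gamma$ going to infinity, which is exactly how they are detected. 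The correct input is the class $\mathcal{G}^*$: the $G$-action is $\Gamma$-compatible \emph{and} fixes pointwise a neighborhood of an auxiliary ray $\gamma$ disjoint from $\Gamma$. The connect-sum disk $D$ must be placed inside that fixed neighborhood; only then is each $\theta(g)$ the identity on $D$ and extendable by the identity over $M\setminus\mathring{D'}$, while still acting faithfully at infinity. Without this device your extension-by-the-identity is not even defined.

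Second, your injectivity step --- deforming an ambient isotopy of $M\setminus S$ into the graft region by ``engulfing / collar-uniqueness'' --- is not a workable plan and is not what the paper does; controlling proper isotopies of a non-compact 4-manifold in this way has no obvious justification. The paper's argument is more elementary and purely point-set: given a proper isotopy $H\colon U\times[0,1]\to M\setminus p$ from $\theta(g)|_U$ to the identity, consider its track, intersect the (closed) image with the compact set $(M\setminus\mathring{D'})\times[0,1]$ (in the non-compact case, with $K\times[0,1]$ for a compact $K$ separating the end $\epsilon_p$ from the other ends), pull the resulting compact set back under the proper map $H$, and shrink $U$ to a smaller closed neighborhood of infinity $U'$ whose entire track lies in $(\mathcal{R}\setminus\mathring{D})\times[0,1]$. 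Restricting $H$ to $U'$ then gives a proper isotopy inside $\mathcal{R}$ between $\theta(g)|_{U'}$ and the identity, contradicting Gompf's injection $G\hookrightarrow\mathcal{D}^{\infty}(\mathcal{R})$. Note the contradiction is obtained in $\mathcal{D}^{\infty}$, not $\mathcal{D}$, and in the non-compact case one additionally arranges (since $M$ has only countably many collared ends) that the grafted ends are non-diffeomorphic to each other and to every end of $M$, so that the uncountably many smoothings are genuinely distinct.
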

\begin{new1}
Let $M$ be a topological 4-manifold, $S \subsetneq \mathring{M}$ a non-empty discrete set of points. 
\begin{enumerate}
    \item If $M$ is smoothable, then $M \setminus S$ admits a smoothing for which the map $r_{\epsilon_p}$ is surjective for each $p \in S$.
    \item If $M$ is non-smoothable and $|S| \ge 2$, then $M \setminus S$ admits a smoothing for which $r_{\epsilon_p}$ is surjective for all but one $p \in S$.
\end{enumerate}
\end{new1}
Here $\mathring{M}$ indicates the \emph{manifold} interior, i.e $M \setminus \partial M$. 

Note that in both Theorem \ref{new} and Theorem \ref{new1} we did not address the case $M$ a topological \emph{non-smoothable} 4-manifold with only one puncture. 
The following natural question arises:
\begin{question}
Do Theorems \ref{new} and \ref{new1} apply also for $M\setminus p$, where $M$ is a topological \emph{non-smoothable}  4-manifold and $p \in \mathring{M}$?
\end{question}

The proofs of the two theorems were  inspired by \cite{furuta} and \cite{anexoticmanagerie}. In the referenced papers uncountability of smooth structures was extended from $\mathbb{R}^4$ to punctured 4-manifolds exploiting the homeomorphism $M\setminus p \approx M \# \mathbb{R}^4$ for $p \in \mathring{M}$. We will do the same here. This is rather intuitive: in \cite[Theorem 4.4]{groupactions}, the uncountability of $\mathcal{D}(\mathcal{R})$ is proven by exhibiting a group injection  $G \hookrightarrow \mathcal{D}^{\infty}(\mathcal{R})$, where $G$ is an uncountable group with the discrete topology. The behavior at infinity was enough to detect non-isotopic diffeomorphisms of $\mathcal{R}$. It is quite natural then to try puncturing a 4-manifold $M$ in such a way to have an end diffeomorphic to that of a previously studied  $\mathbb{R}^4$ homeomorph. We will then need to show how the exotic behavior at infinity can distinguish isotopy classes of self-diffeomorphisms of the punctured manifold.

Our starting point will be the following lemma. This is an easy case of Theorem \ref{new}.
\begin{theorem1} 
Let $M$ be a closed, smooth 4-manifold and $p \in M$. Then there exists a smoothing  of $M\setminus p$ whose diffeotopy groups  $\mathcal{D}^{\infty}(M\setminus p)$ and $\mathcal{D}(M\setminus p)$ are uncountable.
\end{theorem1}
The proof of this result will showcase the techniques used throughout the paper. The main idea is the following: take an uncountable group $G \in \mathcal{G}^*$ \cite{groupactions}. $\mathcal{G}^*$ is the collection of all groups with discrete topology that have an action via diffeomorphisms on $\mathbb{R}^4$ that respects certain properties. Intuitively, the diffeomorphisms defined by elements of $G \in \mathcal{G}^*$ interact equivariantly with a collection of rays properly embedded in $\mathbb{R}^4$ and keep fixed pointwise a region disjoint from these rays. The precise definition of $\mathcal{G}^*$ is given in Section 2.
We construct a $G$-action via diffeomorphisms on the punctured manifold $M \setminus p$. To do so, we will use $M \# \mathbb{R}^4 \approx  M \setminus p$. Once the action is constructed, we exploit Gompf's result to derive a contradiction. We will so prove that $G$ injects into $\mathcal{D}^{\infty}(M \setminus p)$ as well as $\mathcal{D}(M \setminus p)$. 

It is worthwhile to emphasize that diffeotopy groups of \emph{compact} 4-manifolds are mostly shrouded in mystery as well. This showcases once again the discrepancies between dimension 4 and other dimensions. The case of $\mathbb{S}^4$ is for example far from understood. There are however recent breakthroughs by Watanabe, Gabai and Budney-Gabai \cite{watanabe2018some,budney2019knotted,gabai20204} that sparked new life in this area of research, which still has a lot to offer in terms of open questions. 

\subsection{Outline}

 In Section 2 we will recall all the definitions and results we need from Gompf's paper.
Section 3 and 4 will be devoted to the proofs of Theorem \ref{new} and Theorem \ref{new1} respectively.
\subsection{Acknowledgements}
The work presented in this paper is part of my master's thesis \emph{Smooth structures on non-compact 4-manifolds} at the University of Bonn, written under the supervision of Dr. Arunima Ray. I thank her for being extremely supportive and for all the useful discussions we had. I also thank Prof. Robert E. Gompf, whose outstanding work inspired my research and whose kind answers helped me a lot during the last months. 
 
\section{Group actions and Exotic $\mathbb{R}^4$'s}
 We recall the main definitions and results from \cite{groupactions}.
\subsection{Diffeotopy groups at infinity and diffeotopy groups of the ends}

Given a smooth manifold $V$, we study its diffeomorphism group modulo isotopy.
\begin{definition}
Let $V$ be a smooth manifold. The group $\mathcal{D}(V)$ of isotopy classes of self-diffeomorphisms of $V$ is called the \emph{diffeotopy group of $V$}.
\end{definition}

For a non-compact manifold $V$, we also have an analogous notion in terms of diffeomorphisms at infinity. We now give a precise definition of such objects.

\begin{definition}
Given a non-compact manifold $V$, a \emph{closed neighborhood of infinity} is a codimension-0 submanifold $U \subseteq V$ that is a closed subset whose complement has compact closure (see Figure \ref{endneighborhood}).
\end{definition}
\begin{definition}\label{end}
A \emph{compact exhaustion} of a connected topological manifold $M$ is a sequence $\{C_i\}_{i=1}^{\infty}$ of compact, connected, codimension-0 submanifolds with $C_i \subseteq Int_M(C_{i+1}) $ (topological interior) and $\bigcup_i C_i= M$. By letting $N_i:= \overline{M \setminus C_i}$, we obtain a corresponding \emph{cofinal sequence of closed neighborhoods of infinity}. Each such $N_i$ has connected components $\{N_i^j\}_{j=1}^{k_i}$, where $k_i= \infty$ is also allowed. We can arrange that each $N_i^j$ is noncompact by enlarging $C_i$ to include all the compact components of $N_i$.

In the above arrangement, an \emph{end} $\epsilon$ of $M$ is determined by a nested sequence $(N_i^{h_i})_{i=1}^{\infty}$ of components of the $N_i$; each component is called a closed \emph{neighborhood} of $\epsilon$ (see Figure \ref{endneighborhood}).
\end{definition}
\begin{remark}
Changing the sequence $\{C_i\}_i$ does not change the homeomorphism type of space of the ends of $M$. More information about ends of spaces can be found in \cite{Freudenthal1931, Freudenthal1942NeuaufbauDE, Peschke1990TheTO}.
\end{remark}
\begin{example}
The Euclidean space $\mathbb{R}^n$ for $n \ge 2$ has one end $\epsilon$. $\epsilon$ has a closed neighborhood that is homeomorphic to $\mathbb{S}^{n-1} \times [0, \infty)$. The space $\mathbb{S}^1 \times \mathbb{R}$ has two ends $\epsilon_1$ and $\epsilon_2$. They have closed neighborhoods $N_1$ and $N_2$ respectively which are both homeomorphic to $\mathbb{S}^1 \times [0,\infty)$. A common example of an infinite-ended
space is the universal cover of $\mathbb{S}^1 \vee \mathbb{S}^1$.
\end{example}

\begin{figure}[htb]
\centering
\begin{tikzpicture}
%\draw[step=1cm,color=gray] (0.0,0) grid (13,8);%Uncomment this to get some helpful grid lines
\node[anchor=south west,inner sep=0] at (0,0){\includegraphics[width=12cm]{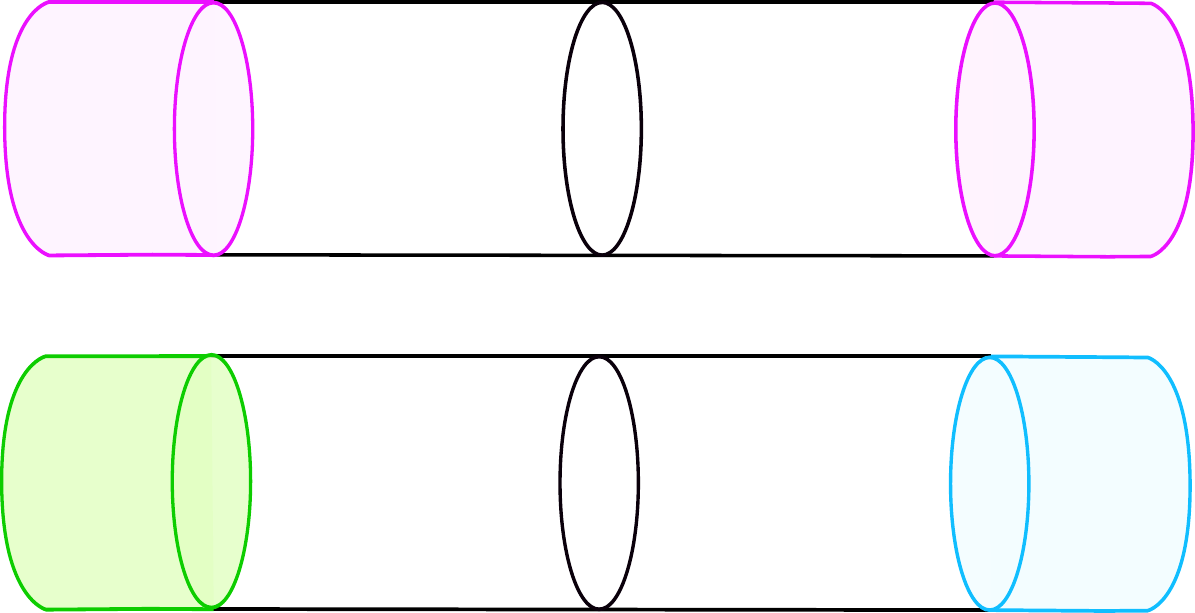}};
\node at (1,1.3) {$N_1$};
\node at (11,1.3) {$N_2$};
\node at (1,4.9) {$U$};
\node at (11,4.9) {$U$};
\node at (6,3.1) {$\mathbb{S}^1\times \mathbb{R}$};
\node at (-0.2,1.3) {$\epsilon_1$};
\node at (12.2,1.3) {$\epsilon_2$};
\end{tikzpicture}
\caption{On the top, $U$ is a closed neighborhood of infinity of $\mathbb{S}^1\times \mathbb{R}$;
$U$ is homeomorphic to the disjoint union of two copies of $\mathbb{S}^1\times [0, \infty)$. 
On the bottom, two closed neighborhoods $N_1$ and $N_2$, both homeomorphic to $\mathbb{S}^1\times [0, \infty)$, of the ends $\epsilon_1$ and $\epsilon_2$ respectively.}
\label{endneighborhood}
\end{figure}
\begin{definition}\label{diffeoinfinity}
Given non-compact smooth manifolds $V$ and $V'$, suppose $f_i \colon Y_i \to Y_i', i = 1,2 $, are diffeomorphisms between closed neighborhoods of infinity $Y_i \subseteq V$ and $Y_i' \subseteq V'$. We refer to $f_1$ and $f_2$ as \emph{equivalent} if they agree outside some compact subset of $V$ containing the complements of $Y_1$ and $Y_2$. A \emph{diffeomorphism at infinity} from $V$ to $V'$ is an equivalence class of such diffeomorphisms. 

If $V$ has a single end, this is referred to as a \emph{diffeomorphism of the end} of $V$ and $V'$. In order to simplify the notation we write $f$ instead of $[f]$ for the equivalence class.

We say that a diffeomorphism at infinity $f$ \emph{extends over} $V$ if the equivalence class $f$ contains a diffeomorphism $V \to V'$. We can compose such diffeomorphisms at infinity, making the set of self-diffeomorphism of $V$ at infinity into a group. Given a group $G$, we define a \emph{$G$-action at infinity} on $V$ to be a homomorphism of $G$ into this group.
\end{definition}

\begin{definition}
Let $V$ be a non-compact smooth manifold. Two diffeomorphisms at infinity of $V$ will be called \emph{isotopic} if they have representatives that are properly isotopic as proper embeddings into $V'$. Recall a \emph{proper} map has preimage of each compact subset compact.

 The group $\mathcal{D}^{\infty}(V)$ of isotopy classes of self-diffeomorphisms of $V$ at infinity is called the \emph{diffeotopy group of $V$ at infinity}.
\end{definition}

We obtain a homomorphism $r\colon \mathcal{D}(V) \to \mathcal{D}^{\infty}(V)$ by sending diffeomorphisms to their corresponding equivalence classes at infinity.

Focusing on closed neighborhoods of the ends instead of closed neighborhoods of infinity, we obtain an analogue of Definition \ref{diffeoinfinity}.
\begin{definition}\label{diffeoend}
Let $V$ and $V'$ be non-compact smooth manifolds and  $\epsilon$ and $\epsilon'$ ends of $V$ and $V'$ respectively.
Suppose $f_i \colon N_i \to N_i', i=1,2 $, are diffeomorphisms between closed neighborhoods of $\epsilon$ and $\epsilon'$. Assume $N_i$ and $N_i'$ are components of $U_i$ and $U_i'$, which are closed neighborhoods of infinity of $V$ and $V'$, respectively. We refer to $f_1$ and $f_2$ as \emph{equivalent} if they agree outside some suitably large compact subset of $V$ containing the complements of $U_1$ and $U_2$. A \emph{diffeomorphism of the end} $\epsilon$ is an equivalence class of such diffeomorphisms. 

We say that a diffeomorphism $f$ of $\epsilon$ \emph{extends over} $V$ if the equivalence class $f$ contains a diffeomorphism $V \to V'$. We can compose such diffeomorphism of the end, making the set of self-diffeomorphism of $\epsilon$ into a group. Given a group $G$, we define a \emph{$G$-action at the end} to be a homomorphism of $G$ into this group. 
\end{definition}
As before, we will denote the equivalence class with $f$ instead of $[f]$.
Note that Definition \ref{diffeoend} fits with Definition \ref{diffeoinfinity} in the case that $V$ has a single end.

\begin{definition}\label{end-diffeotopy}
Let $V$ be a non-compact smooth manifold. Two diffeomorphisms of an end $\epsilon$ of $V$ will be called \emph{isotopic} if they have representatives that are properly isotopic as proper embeddings in $V'$.

 The group $\mathcal{D}^{\epsilon}(V)$ of isotopy classes of self-diffeomorphisms of $\epsilon$ is called the \emph{diffeotopy} group of $\epsilon$.
\end{definition}

For each end of the manifold $V$, we also obtain an  homomorphism $r_{\epsilon}: \mathcal{D}(V) \to \mathcal{D}^{\epsilon}(V)$ by sending diffeomorphisms to their corresponding equivalence class with respect to the end $\epsilon$. 

The following two remarks assume that the ends of $V$ are isolated, which is a consistent assumption in our setting. If we drop this condition, the constructions in remarks \ref{remarksubgroup} and \ref{remarkdirectproduct} might not be well defined.

\begin{remark}\label{remarksubgroup}
We will later refer to $\mathcal{D}^{\epsilon}(V)$ as a subgroup of $\mathcal{D}^{\infty} (V)$. Strictly speaking, this is not quite correct. A diffeomorphism $\phi$ of $\epsilon$ embeds just a closed neighborhood $U$ of the specific end $\epsilon$. However, $\phi$ can be turned into a diffeomorphism at infinity $\overline{\phi}$ by extending it via the identity. Let $U$ be a component of $W$, a closed neighborhood of infinity of $V$. Let $\overline{\phi}$ be the proper embedding of $W$ in $V$ defined by $\overline{\phi}|_{U}= \phi$ and $\overline{\phi}= $ identity on the other components of $W$. When we refer to $\phi$ as a diffeomorphism at infinity we mean $\overline{\phi}$.  Under this identification we can interpret $\mathcal{D}^{\epsilon}(V)$ as a subgroup of  $\mathcal{D}^{\infty} (V)$.
\end{remark}
\begin{remark}\label{remarkdirectproduct}
If $V$ has ends $\epsilon_i$ for $i \in \Lambda$, then the direct product $\prod_{i \in \Lambda} \mathcal{D}^{\epsilon_i}(V)$ can be interpreted as a subgroup of $\mathcal{D}^{\infty}(V)$ as follows. Take a product $(\phi_i)_{i \in \Lambda}$, where $\phi_i$ embeds a closed neighborhood $U_i$ of the end $\epsilon_i$ inside $V$ for each $i$. Then $(\phi_i)_{i \in \Lambda}$ defines a diffeomorphism $\phi$ at infinity of $V$: take a closed neighborhood of infinity $U$, where $U_i$'s are components of $U$,  and define $\phi$ to embed $U_i$ via $\phi_i$ and the remaining components via the identity. Identifying $(\phi_i)_{i \in \Lambda}$ with $\phi$ we turn $\prod_{i \in \Lambda} \mathcal{D}^{\epsilon_i}(V)$ into a subgroup of $\mathcal{D}^{\infty}(V)$. 
\end{remark}
\subsection{Group actions} We follow \cite{groupactions} for the terminology.

Let $\mathbb{R}^4$ denote the Euclidean space with its  standard smoothing.
Let $\Sigma$ be a countable, discrete set and let $\Gamma\colon \Sigma \times [0,\infty) \hookrightarrow \mathbb{R}^4$ be an injection such that $\gamma_s \colon = \Gamma|_{\{s\}\times [0,\infty)}$ is a ray for each $s \in \Sigma$. Let $G$ be a group with the discrete topology.
\begin{definition}{\cite[Definition 4.1]{groupactions}}\label{gamma-compatible}
Fix $\Gamma$ and $G$ as above.
A $G$-action via diffeomorphisms on $\mathbb{R}^4$ will be called \emph{$\Gamma$-compatible} if $G$ acts effectively on $\Sigma$ so that for each $g \in G$ and $(s,t) \in \Sigma \times [0,\infty)$ we have $g \circ \Gamma(s,t) = \Gamma(g(s),t)$ and so that the stabilizer of each $s$ fixes pointwise a neighborhood of $\gamma_s([0,\infty))$.
\end{definition}
Let $\mathcal{G}$ be the set of all groups $G$ so that, after some choice of $\Gamma$, $G$ acts $\Gamma$-compatibly on $\mathbb{R}^4$. $\mathcal{G}^+$ denotes the subset for which the action can be chosen to preserve orientation, and $\mathcal{G}^* \subseteq \mathcal{G}^+$ is the subset for which there is such a $\Gamma$-compatible action fixing pointwise a neighborhood of some ray $\gamma$ in $\mathbb{R}^4$ whose image is disjoint from that of $\Gamma$. We will mainly be interested in the class $\mathcal{G}^* \subsetneq \mathcal{G}$.
\begin{example}\label{exampleG}
\begin{enumerate}
    \item $\mathbb{Q}^3$ lies in $\mathcal{G}^*$. Take $\Gamma$ to be the inclusion $\mathbb{Q}^3 \times [0,\infty) \subset \mathbb{R}^3 \times \mathbb{R} = \mathbb{R}^4 $ where $\mathbb{Q}^3$ is given the discrete topology. Take the obvious action on $\mathbb{R}^3 \times [0,\infty)$, tapered to be trivial on $\mathbb{R}^3 \times (-\infty, -1]$. This defines a $\Gamma$-compatible action of $\mathbb{Q}^3$ on $\mathbb{R}^4$. By \cite[Proposition 4.2]{groupactions},  many uncountable groups such as $\mathbb{Q}^{\omega}$, where $\omega$ denotes the first infinite cardinal number, lie in $\mathcal{G}^{*} \subsetneq \mathcal{G}^{+} \subsetneq \mathcal{G}$. 
    \item Every countable group $G$ of Euclidean or hyperbolic isometries of $\mathbb{R}^3$ lies in $\mathcal{G}$, and if it preserves orientation, in $\mathcal{G}^+$. Choose a point $p \in \mathbb{R}^3$ with trivial stabilizer under the elements $g \in G$. Define then $\Gamma : G \times [0, \infty) \to \mathbb{R}^3 \times \mathbb{R} = \mathbb{R}^4$ by setting $\Gamma(g,t)=(g(p),t)$.
\end{enumerate}
\end{example}

\subsection{Main results from \cite{groupactions}}
We next state the results of \cite{groupactions} that we will generalize.
\begin{lemma}[{\cite[Theorem 4.4, Case (1)]{groupactions}}]\label{Gompf1}
There are uncountably many diffeomorphism types of $\mathbb{R}^4$ homeomorphs $\mathcal{R}$\footnote{An $\mathbb{R}^4$ homeomorph is a manifold homeomorphic to $\mathbb{R}^4$ but not necessarily diffeomorphic to it. We follow\cite{groupactions} for the terminology.} satisfying the following characterization. $\mathcal{R}$ embeds in $\mathbb{R}^4$ and every $G \in \mathcal{G}$ has an action on $\mathcal{R}$ that injects into the diffeotopy groups $\mathcal{D}(\mathcal{R})$ and $\mathcal{D}^{\infty}(\mathcal{R})$. The elements of $G$ define orientation preserving diffeomorphisms of $\mathcal{R}$ if and only if the defining $G$-action on $\mathbb{R}^4$ preserves orientation.
\end{lemma}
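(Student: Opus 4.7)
The plan is to build $\mathcal{R}$ as a $G$-equivariant end-sum of the standard $\mathbb{R}^4$ with copies of small exotic $\mathbb{R}^4$'s along the rays $\gamma_s$, and then to detect the induced $G$-action on $\mathcal{D}(\mathcal{R})$ and $\mathcal{D}^{\infty}(\mathcal{R})$ via a rigidity property of the end. Concretely, first I would fix a $\Gamma$-compatible $G$-action on $\mathbb{R}^4$ and, invoking Taubes--DeMichelis--Freedman, pick an uncountable family $\{\mathcal{E}_\alpha\}$ of pairwise non-diffeomorphic \emph{small} exotic $\mathbb{R}^4$'s (each embedding as a closed subset of standard $\mathbb{R}^4$). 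For each $\alpha$, form $\mathcal{R}_\alpha$ by simultaneously end-summing $\mathbb{R}^4$ with one copy of $\mathcal{E}_\alpha$ along every ray $\gamma_s$, $s\in\Sigma$. Since small exotic $\mathbb{R}^4$'s embed in $\mathbb{R}^4$, so does each $\mathcal{R}_\alpha$.

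Because $G$ permutes the rays equivariantly ($g\cdot\gamma_s = \gamma_{g(s)}$) and the stabilizer of each $s$ fixes a neighborhood of $\gamma_s$ pointwise, the $G$-action on $\mathbb{R}^4$ extends to $\mathcal{R}_\alpha$ by permuting the added $\mathcal{E}_\alpha$-peninsulas in the same pattern in which $G$ permutes $\Sigma$; orientation preservation is inherited from the original action. To produce uncountably many diffeotypes, an end invariant (such as a Taylor-type invariant applied to the end of $\mathcal{R}_\alpha$) should detect each $\mathcal{E}_\alpha$, so that distinct $\alpha$'s yield non-diffeomorphic $\mathcal{R}_\alpha$.

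For injectivity, note that the $G$-action on $\mathcal{D}(\mathcal{R}_\alpha)$ factors through $\mathcal{D}^{\infty}(\mathcal{R}_\alpha)$ via the natural homomorphism $r$, so it suffices to prove injectivity into $\mathcal{D}^{\infty}(\mathcal{R}_\alpha)$. Suppose $g\neq e$ lies in the kernel; by effectiveness pick $s\in\Sigma$ with $g(s)\neq s$. The induced $\phi_g$ carries the $s$-th $\mathcal{E}_\alpha$-peninsula onto the $g(s)$-th one, so a proper isotopy from $\phi_g$ to the identity at infinity would provide a proper ambient isotopy of $\mathcal{R}_\alpha$ exchanging the two peninsulas. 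The injection thus reduces to the rigidity claim that the combinatorial arrangement of end-summed peninsulas in $\mathcal{R}_\alpha$ is a proper-isotopy invariant, so that any permutation detected on $\Sigma$ yields a non-trivial element of $\mathcal{D}^{\infty}(\mathcal{R}_\alpha)$.

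The main obstacle is precisely this rigidity claim: proving that no proper isotopy at infinity can permute the end-summed peninsulas. Naively, the peninsulas are pairwise separated by standard-$\mathbb{R}^4$ collars and each is \emph{essentially} exotic, so one expects that no proper isotopy can slide one onto another. Making this rigorous would require equipping each peninsula with a localized end-invariant (presumably via surface-embedding obstructions coming from $\mathcal{E}_\alpha$) that is additive over end-sums and concentrated at each peninsula, and then observing that the total distribution of such invariants along the end of $\mathcal{R}_\alpha$ is preserved by any proper isotopy at infinity. Setting this invariant up cleanly and verifying its compatibility with proper isotopies in the non-compact smooth category is the technical heart of the argument.
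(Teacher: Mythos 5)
Your construction is the same one the paper records (Construction~\ref{constr}): equivariantly truncate and tube the rays $\gamma_s$, end-sum a fixed exotic summand along each one, and let $G$ act by permuting the peninsulas; uncountability of diffeomorphism types comes from varying the summand and the fact that only countably many $\mathbb{R}^4$ homeomorphs realize a given end. Note, however, that the paper does not actually prove this lemma --- it is quoted verbatim from Gompf's Theorem~4.4 and the paper explicitly defers the injectivity argument to that reference --- so the only part of your proposal that can be checked against the source here is the construction, which matches.

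The genuine gap is exactly where you flag it, and the repair you sketch would not work as stated. You reduce injectivity to the ``rigidity claim'' that the combinatorial arrangement of peninsulas is a proper-isotopy invariant, to be certified by a localized, additive end-invariant concentrated at each peninsula. But in your construction (and in the paper's) \emph{every} peninsula is a copy of the same summand $\mathcal{E}_\alpha$, so any invariant that merely records ``which exotic piece sits at which ray'' takes the same value at every peninsula and is therefore symmetric under the very permutation you are trying to obstruct: the configurations $\phi_g(\mathcal{R}_\alpha)$ and $\mathcal{R}_\alpha$ are literally the same subset near infinity, and no distribution-of-invariants argument can distinguish them. The obstruction has to come from finer data --- how a hypothetical proper isotopy from $\phi_g$ to the identity interacts with compact sets, with the multiray $\Gamma$, and with carefully chosen summands (Gompf uses the family $R_t$ with a one-way embedding property coming from Taubes' periodic-ends theorem, together with control on where a proper isotopy can carry the rays) --- rather than from an invariant of the end alone. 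Your proposal also quietly uses the class $\mathcal{G}^*$'s fixed ray $\gamma$ nowhere, whereas the rest of the paper relies on it; for the lemma as stated ($G\in\mathcal{G}$) that is fine, but it is a sign that the detection mechanism you would need is not yet present in your argument. As written, the injectivity --- which is the entire content of the lemma beyond the construction --- remains unproved.
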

We briefly discuss the construction in the proof of \cite[Theorem 4.4]{groupactions}; this will be useful in order to better understand our own construction in Section 3.
\begin{const}\label{constr}
This is the construction of an $\mathbb{R}^4$ homeomorph $\mathcal{R}$ satisfying the characterization in Lemma \ref{Gompf1}. We do not provide the proof for the injection property; it can be found in \cite{groupactions}.

We will build $\mathcal{R}$ as an end sum.\footnote{This is the non-compact analogue of the boundary sum. Given two smoothings $\mathcal{R}_1, \mathcal{R}_2$ of $\mathbb{R}^4$, the end sum $\mathcal{R}_1 \natural \mathcal{R}_2$ is defined as follows: let $\gamma_1: [0,\infty) \to \mathcal{R}_1$ and $\gamma_2: [0,\infty) \to \mathcal{R}_2$ be two smooth properly embedded rays with tubular neighborhoods $\nu_1, \nu_2$ respectively. Then $\mathcal{R}_1 \natural \mathcal{R}_2:= \mathcal{R}_1 \cup_{\phi_{1}} ([0,1] \times \mathbb{R}^3) \cup_{\phi_{2}} \mathcal{R}_2$ , where $\phi_1: [0,\frac{1}{2}) \times \mathbb{R}^3 \to \nu_1 $ and $\phi_2: (\frac{1}{2},1] \times \mathbb{R}^3 \to \nu_2 $ are orientation preserving diffeomorphisms which respect the $\mathbb{R}^3$ bundle structures. For a more comprehensive treatment of the end sum operation, see \cite{end-sum}.}
For a fixed $G \in \mathcal{G}$, $\Sigma$ and $\Gamma$ (as in Definition \ref{gamma-compatible}), where $G$ acts $\Gamma$-compatibly on $\mathbb{R}^4$, identify $\Sigma$ with $\mathbb{Z}^{+}$. We will find an increasing sequence $(t_n)$ such that, when each ray $\gamma_n:= \Gamma|_n$ is restricted to the interval $[t_n, \infty)$, we get a multiray $\overline{\Gamma}$ with an equivariant\footnote{By equivariant we mean $g \circ \varphi_n(x) = \varphi_{g(n)}(x)$.} tubular neighborhood map, whose restriction over each truncated ray we denote by $\varphi_n \colon [t_n, \infty) \times \mathbb{R}^3 \to \mathbb{R}^4$.

We truncate the rays $\gamma_n$ for each $n$ so that they can be tubed without getting undesired intersections.

To do so, construct the sequence by induction on $n$, starting with $t_1=0$. Given $n \ge 1$, suppose that for each $i \le n-1$ we have already equivariantly defined each $\varphi_i|_{[t_i,t_n] \times \mathbb{R}^3}$ so that the image lies in a compact set $K_{in}$ disjoint from each other $K_{jn}$ and from each $\gamma_{j}((t_n, \infty))$ with $j \le n$. Choose $t_{n+1} \ge t_n + 1$ so that $\gamma_{n+1} ([t_{n+1}, \infty))$ avoids each $K_{in}$ and the ball of radius $n$. Then define $\varphi_{i}|_{[t_n,t_{n+1}]\times \mathbb{R}^3}$ for each $i \le n$ so that the induction hypotheses are extended to $n+1$.

If we end sum $\mathbb{R}^4$ with copies of a fixed  $\mathbb{R}^4$ homeomorph along each $\varphi_n$, the resulting $\mathbb{R}^4$ homeomorph,  call it $\mathcal{R}$, inherits a $G$-action. 
\end{const}
\begin{remark}\label{gammainR}
Start with $G \in \mathcal{G}^*$ with a $\Gamma$-compatible action on $\mathbb{R}^4$ for some multiray $\Gamma$. By definition, there is a ray $\gamma$ in $\mathbb{R}^4$, whose image is disjoint from $\Gamma$, with a $G$-fixed neighborhood. Since in Construction \ref{constr} we used end-sums along the (truncated) rays $\gamma_n$ to construct  $\mathcal{R}$, $\gamma$ defines a proper embedding of $[0, \infty)$ in $\mathcal{R}$ as well. Then the $G$-action inherited by $\mathcal{R}$ fixes pointwise $\gamma$ and its neighborhood. 
\end{remark}

\section{Proof of Theorem \ref{new}}
The goal of this section is to extend  Lemma \ref{Gompf1}; how far can we extend this result to general punctured 4-manifolds? The starting point will be the case of a once punctured \emph{smooth, closed} 4-manifold $M$ (Lemma \ref{theorem1}). This first step will give us the construction we will use to prove Theorem \ref{new} in full generality.

\begin{lemma} \label{theorem1}
Let $M$ be a closed smooth 4-manifold and $p \in M$. Then there exists a smoothing  of $M \setminus p$ whose diffeotopy groups  $\mathcal{D}^{\infty}(M \setminus p)$ and $\mathcal{D}(M \setminus p)$ are uncountable.
\end{lemma}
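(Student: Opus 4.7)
The plan is to exploit the classical homeomorphism $M \setminus p \approx M \# \mathbb{R}^4$ and replace the standard $\mathbb{R}^4$ summand by one of Gompf's $\mathbb{R}^4$ homeomorphs $\mathcal{R}$ produced by Lemma \ref{Gompf1}, arranging the connected sum in a region on which a chosen $G$-action on $\mathcal{R}$ is trivial, so that the action extends over the whole manifold. Concretely, I would fix an uncountable group $G \in \mathcal{G}^{*}$ (which exists by Example \ref{exampleG}, e.g.\ $G = \mathbb{Q}^{\omega}$) and apply Lemma \ref{Gompf1} to obtain an $\mathbb{R}^4$ homeomorph $\mathcal{R}$ together with an injective homomorphism $\rho \colon G \hookrightarrow \mathcal{D}^{\infty}(\mathcal{R})$ realised by an actual $G$-action through diffeomorphisms of $\mathcal{R}$.

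By Remark \ref{gammainR}, this $G$-action on $\mathcal{R}$ fixes pointwise an open neighborhood $U$ of a properly embedded ray $\gamma \subset \mathcal{R}$. Choose a smooth open ball $B \subset U$, and a smooth open ball $B' \subset M$ containing $p$; then glue $\overline{M \setminus B'}$ to $\overline{\mathcal{R} \setminus B}$ along their boundary $3$-spheres via an orientation-reversing diffeomorphism. The resulting smooth manifold $V$ is a smoothing of $M \# \mathbb{R}^{4} \approx M \setminus p$. Since the $G$-action on $\mathcal{R}$ is the identity on $B$, it restricts to a smooth action on $\overline{\mathcal{R} \setminus B}$ which is the identity on the gluing sphere; extending by the identity on $\overline{M \setminus B'}$ yields a smooth $G$-action $\widetilde{\rho} \colon G \to \mathrm{Diff}(V)$.

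The key remaining step is to show that the induced map $G \to \mathcal{D}^{\infty}(V)$ is injective; then, composing with the restriction homomorphism $r \colon \mathcal{D}(V) \to \mathcal{D}^{\infty}(V)$ also yields an injection $G \hookrightarrow \mathcal{D}(V)$, proving both groups uncountable. For the injectivity, observe that because $M$ is compact, every sufficiently small closed neighborhood of infinity of $V$ lies inside $\overline{\mathcal{R} \setminus B}$ and is naturally identified with a closed neighborhood of infinity of $\mathcal{R}$, and on any such neighborhood $\widetilde{\rho}(g)$ agrees with $\rho(g)$. Consequently one obtains a well-defined homomorphism $\mathcal{D}^{\infty}(V) \to \mathcal{D}^{\infty}(\mathcal{R})$ whose composition with $\widetilde{\rho}$ recovers $\rho$, and the injectivity of $\rho$ supplied by Lemma \ref{Gompf1} forces the injectivity of $\widetilde{\rho}$.

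I expect the main obstacle to be making this end-comparison rigorous: one must check that a proper isotopy in $V$ between a representative of $\widetilde{\rho}(g)$ and the identity can be trimmed to a proper isotopy in $\mathcal{R}$. This should follow by choosing the closed neighborhood of infinity small enough to be disjoint from the gluing sphere, so that the isotopy takes place entirely on the $\mathcal{R}$-side of $V$, where the identification with $\mathcal{R}$ is literal; some care is required because the definition of a diffeomorphism at infinity only prescribes the map up to modification on a compact set. Once this identification step is justified, uncountability of both $\mathcal{D}(V)$ and $\mathcal{D}^{\infty}(V)$ is an immediate consequence of the uncountability of $G$.
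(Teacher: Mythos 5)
Your construction is the same as the paper's: connect-sum $M$ with a Gompf homeomorph $\mathcal{R}$ along a ball in the $G$-fixed neighborhood of the ray $\gamma$, extend the $G$-action by the identity over the $M$-side, and reduce injectivity into $\mathcal{D}^{\infty}(M\setminus p)$ to the injectivity of $G\hookrightarrow\mathcal{D}^{\infty}(\mathcal{R})$ from Lemma \ref{Gompf1}. The only place where your proposal is not yet a proof is exactly the step you flag as the ``main obstacle,'' and your suggested fix for it is slightly off. Choosing the closed neighborhood of infinity $U$ small enough to be disjoint from the gluing sphere does \emph{not} by itself force a proper isotopy $H\colon U\times[0,1]\to V$ between $\widetilde{\rho}(g)|_U$ and the inclusion to ``take place entirely on the $\mathcal{R}$-side'': at intermediate times $t\in(0,1)$ the embedding $H_t$ may carry points of $U$ across the gluing sphere into $\overline{M\setminus B'}$, no matter how far out $U$ sits, so the restriction of $H$ to $U$ need not land in $\mathcal{R}$.

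The paper's argument trims the \emph{track} of the isotopy rather than just its endpoints. Regard $H$ as a proper (hence closed) level-preserving embedding $U\times[0,1]\hookrightarrow (M\setminus p)\times[0,1]$ and set $C:=H(U\times[0,1])\cap\bigl((M\setminus \mathring{D'})\times[0,1]\bigr)$; since $M\setminus\mathring{D'}$ is compact and $H(U\times[0,1])$ is closed, $C$ is compact, so $H^{-1}(C)$ is a compact subset of $U\times[0,1]$ and is contained in $K_N\times[0,1]$ for some term of a compact exhaustion of $U$. One then chooses a smaller closed neighborhood of infinity $U'\subseteq U$ with $U'\times[0,1]$ disjoint from $H^{-1}(C)$; only for this $U'$ is $H(U'\times[0,1])$ guaranteed to lie in $(\mathcal{R}\setminus\mathring{D})\times[0,1]$, yielding the proper isotopy in $\mathcal{R}$ that contradicts the injectivity of $\rho$. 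This also shows why your proposed homomorphism $\mathcal{D}^{\infty}(V)\to\mathcal{D}^{\infty}(\mathcal{R})$ is well defined on isotopy classes, which is what your argument actually needs. With this substitution your proof coincides with the paper's; note also that compactness of $M$ (or at least of $M\setminus\mathring{D'}$) is used essentially here, which is why the non-compact case in Theorem \ref{new} requires the extra separating compact set $K$.
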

\begin{proof}
We start by choosing an $\mathbb{R}^4$ homeomorph $\mathcal{R}$. For every uncountable group $G \in \mathcal{G}^*$ there exists (Lemma \ref{Gompf1}) an $\mathbb{R}^4$ homeomeorph, call it $\mathcal{R}$, with a $G$-action fixing pointwise a neighborhood of some ray $\gamma$ (see Remark \ref{gammainR}). $G$ injects in the diffeotopy groups $\mathcal{D}(\mathcal{R})$ and $\mathcal{D}^{\infty}(\mathcal{R})$ by Lemma \ref{Gompf1}.

$M \# \mathbb{R}^4$ is homeomorphic to $M \setminus p$; smooth it as the smooth connected sum between $M$ and $\mathcal{R}$. This gives a smoothing of $M\setminus p$.

 As stated before $G$ acts on $\mathcal{D}(\mathcal{R})$ and $ \mathcal{D}^{\infty}(\mathcal{R})$. Call the group homomorphism $\theta$, i.e each $g \in G$ defines a self-diffeomorphism $\theta(g)$ of $\mathcal{R}$.\footnote{In Definition \ref{diffeoinfinity} we defined a $G$-action at infinity as a homomorphism of $G$ into the diffeomorphism group at infinity. Composing this with the quotient map, we obtain a homomorphism into the diffeotopy group at infinity. In order to simplify the notation, we will omit the quotient map. When we say that a $G$-action at infinity defines a diffeomorphism at infinity we usually mean an isotopy class of diffeomorphisms at infinity. We do the same for $G$-actions via diffeomorphisms.}
 We extend the $G$-action to an action on $\mathcal{D}(M\setminus p)$ and $\mathcal{D}^{\infty}(M\setminus p)$. Each $g$ will  define a self-diffeomorphism of the punctured manifold. 
To do so, take a disk $D$ inside the $G$-fixed neighborhood of $\gamma$; use this disk to perform the smooth connected sum with $M$. Let $g \in G$. Since the $G$-action is trivial in $D$, the diffeomorphism $\theta(g)$ is the identity on $D$. We can then extend it via the identity to $M^4 \setminus \mathring{D'}$, where $D'$ is a disk in $M$ around the point $p$. Hence for each $g \in G$  we get a diffeomorphism of $M \setminus p$  which is the identity on $M \setminus \mathring{D'}$. 

This defines a $G$-action on $\mathcal{D}(M\setminus p)$ (and, by post-composing it with $r$, also on $\mathcal{D}^{\infty}(M\setminus p)$). Call this action $\widetilde{\theta}$; the diffeomorphism defined by $g \in G$ is then $\widetilde{\theta}(g)$.
\begin{figure}[htb]
\centering
\begin{tikzpicture}
%\draw[step=1cm,color=gray] (0,0) grid (14,6);%Uncomment this to get some helpful grid lines
\node[anchor=south west,inner sep=0] at (0,0){\includegraphics[width=14cm]{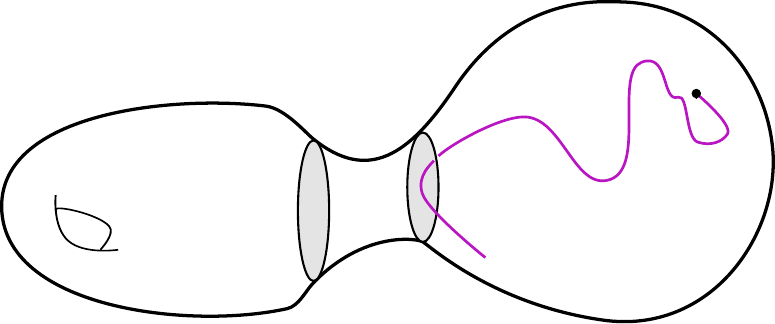}};
\node at (3.2,2) {$M \setminus \mathring{D'}$};
\node at (10.8,3) {$\textcolor{violet}{\gamma}$};
\node at (11.4,2) {$\mathcal{R} \setminus \mathring{D}$};
\node at (3.2,2.5) {$\textcolor{blue}{id}$};
\node at (6.7,2.3) {$\textcolor{blue}{id}$};
\node at (9.4,4) {$\textcolor{violet}{id}$}; 
\node at (11.4,1.3) {$\textcolor{blue}{\theta(g)}$};
\node at (12.6,4.4) {$p$};
\node at (5.68,2){$D'$};
\node at (7.63,1.9){$D$};
\end{tikzpicture}
\caption{The construction of the diffeomorphism $\widetilde{\theta}(g)$. The $G$-fixed ray $\gamma$ going to infinity is in violet. In light gray, the two 4-disks $\mathring{D'}$ and $\mathring{D}$ used to perform the connected sum. In blue we indicate the definition of $\widetilde{\theta}(g)$ on $M \# \mathcal{R}$: $\theta(g)$ is extended via the identity on the boundaries of the disks. }
\label{Extending}
\end{figure}
We give the above construction, which assigns for each $\theta(g)$ diffeomorphism of $\mathcal{R}$ the diffeomorphism $\widetilde{\theta}(g)$ of $M\setminus p$, the name $\#$, reminding us that the connected sum is used to define it. So $\#(\theta(g)) = \widetilde{\theta}(g)$ is defined as $\widetilde{\theta}(g)|_{\mathcal{R} \setminus D} = \theta(g)|_{\mathcal{R} \setminus D}$ and $\widetilde{\theta}(g)|_{M \setminus D'}= id$. See Figure \ref{Extending} for a visual representation of $\#$.

Define $\phi$ (resp. $ \widetilde{\phi}$) to be the composition $r| \circ \theta$ (resp. $r| \circ \widetilde{\theta}$) where $r \colon \mathcal{D}(\mathcal{R}) \to \mathcal{D}^{\infty}(\mathcal{R})$ is the homomorphism we defined in Section 2.\footnote{By $r|$ we mean the restriction of $r $ on $\theta(G)$ (resp. $\widetilde{\theta}(G)$).}

Finally define $\#^{\infty}$ to be the homomorphism sending an equivalence class $\phi(g) \in \phi(G) \subseteq \mathcal{D}^{\infty}(\mathcal{R})$ to the corresponding equivalence class $\widetilde{\phi}(g)$ in $\widetilde{\phi}(G) \subseteq\mathcal{D}^{\infty}(M \setminus p)$. A representative of $\phi(g)$ is given by the embedding of some closed neighborhood of infinity $U$ defined as $\theta(g)|_U$. Shrink $U$ if necessary to make it disjoint from $D$. Then $\theta(g)|_U$ embeds $U$ in $M \setminus p$ as well. In particular, the restriction of $\theta(g)$ on $U$ is the same as the restriction of $\widetilde{\theta}(g)$ on $U$ by our connected sum construction. So $\#^{\infty} (\phi(g))$ is the equivalence class of the diffeomorphism at infinity given by the embedding $\widetilde{\theta}(g)|_U$ of $U$ in $M \setminus p$, i.e the equivalence class $\widetilde{\phi}(g)$.

We summarize everything we have obtained so far in the following diagram:
\begin{center}
\begin{tikzcd}

&\mathcal{D}(\mathcal{R}) \arrow[r, "r"] & \mathcal{D}^{\infty}(\mathcal{R})\\
G \arrow[rd, "\widetilde{\theta}"', bend right=35] \arrow[r, "\theta"] & \theta(G) \arrow[d, "\#"] \arrow[r, "r|"] \arrow[u,symbol=\subseteq] & \phi(G) \arrow[d, "\#^{\infty}"] \arrow[u,symbol=\subseteq]  \\
                                                                       & \widetilde{\theta}(G) \arrow[r, "r|"] \arrow[d,symbol=\subseteq]    & \widetilde{\phi}(G) \arrow[d,symbol=\subseteq] \\
                                              &\mathcal{D}(M \setminus p) \arrow[r, "r"] & \mathcal{D}^{\infty}(M \setminus p)                         
\end{tikzcd}
\end{center}

Gompf proved \cite[Theorem 4.4]{groupactions} that the map $\phi$ (hence also $\theta$) in the upper row is injective, i.e the $G$-action defines an injection of $G$ in $\mathcal{D}^{\infty}(\mathcal{R})$ (hence and also in $\mathcal{D}(\mathcal{R})$) (see Lemma \ref{Gompf1} above). By our definitions of $\widetilde{\theta}, \widetilde{\phi}, \# $ and $\#^{\infty}$, both the left triangle and the right square in the diagram commute.

We want to prove that $\widetilde{\phi}= r \circ \widetilde{\theta}$ is an injection. To do so, it suffices to show that $\#^{\infty}$ is injective, since by commutativity of the diagram  $\widetilde{\phi}= \#^{\infty} \circ \phi$, and we already know that $\phi $ is injective.

\begin{claim}
The group homomorphism $\#^{\infty}$ constructed as above is injective.
\end{claim}
\begin{proof}[Proof of the claim]
For ease of notation, throughout the proof we call $\phi(g)=:g$, meaning that this is the class of diffeomorphisms of infinity that behave like the diffeomorphism $\theta(g)$ on some closed neighborhood of infinity $U$.

If $ \#^{\infty}(g) = id  \in \mathcal{D}^{\infty}(M \setminus p)$, then there are two representatives of their equivalence classes that are properly isotopic as embeddings of $U$, a closed neighborhood of infinity in $M\setminus p$. By our definition of $\#^{\infty}(g)$, a representative is given by the restricion of $\widetilde{\theta}(g)$ on some closed neighborhood of infinity of $M \# \mathcal{R}$ entirely contained in $\mathcal{R} \setminus \mathring{D}$.
Then shrinking $U$ if necessary, we can assume that the two representatives of $\widetilde{\phi}(g)$ and $id$ which are \emph{properly isotopic} embed $U$ in $M\setminus p$  as $\theta(g)|_U$ and as $id$. So we have $H: U \times [0,1] \to M \setminus p$ a proper isotopy with $H_0 = \theta(g)|_U$ and $H_1= id$.

We wish to show that there is such a proper isotopy between representatives of $g$ and $ id \in \mathcal{D}^{\infty}(\mathcal{R})$, i.e a proper isotopy $G: U' \times [0,1] \to \mathcal{R}$ between embeddings $\theta(g)|_{U'}$ and $id$ of $U'$ a closed neighborhood of infinity of $\mathcal{R}$.

We can consider the track $H$ of the proper isotopy, i.e the (level preserving) proper embedding $H: U\times [0,1] \hookrightarrow (M\setminus p) \times [0,1] $. Let $K:= \partial D'$ (the boundary of the disk used to perform the gluing) and $C:= H (U\times [0,1]) \cap (K \times [0,1])$ (see Figure \ref{Injectivity}). $U\times [0,1]$ is closed; $H$ is proper and hence a closed map, which means $H(U \times [0,1])$ is closed as well. $K \times [0,1]$ is compact, so $C$ is compact as it is the intersection between a closed and a compact set.

Take the preimage $H^{-1}(C)$, which is a compact subset of $U \times [0,1]$. Exhaust $U\times [0,1]$ by compact sets $K_n \times [0,1]$; since $H^{-1}(C)$ is compact, it is contained in some $K_N \times [0,1]$, $N \in \mathbb{N}$. Henceforth, we can find a closed (connected) neighborhood of infinity $U' \subseteq U$ so that $U'\times [0,1]$ is disjoint from $H^{-1}(C)$ (see Figure \ref{Injectivity}). Then the image of $U'\times [0,1]$ under $H$ is entirely contained in $\mathcal{R} \setminus \mathring{D}$, since the intersection with $\partial D'$ is trivial and $H(U'\times[0,1])$ is connected. So the restriction of $H$ to $U'$ defines a proper isotopy between $\theta(g)|_{U'}$ and $id$ as embeddings of $U'$ in $\mathcal{R}$. This means that $g=id$ as equivalence classes in $\mathcal{D}^{\infty}(\mathcal{R})$.
\begin{figure}[htb]
\centering
\begin{tikzpicture}
%\draw[step=1cm,color=gray] (0,0) grid (14,13);%Uncomment this to get some helpful grid lines
\node[anchor=south west,inner sep=0] at (0,0){\includegraphics[width=14cm]{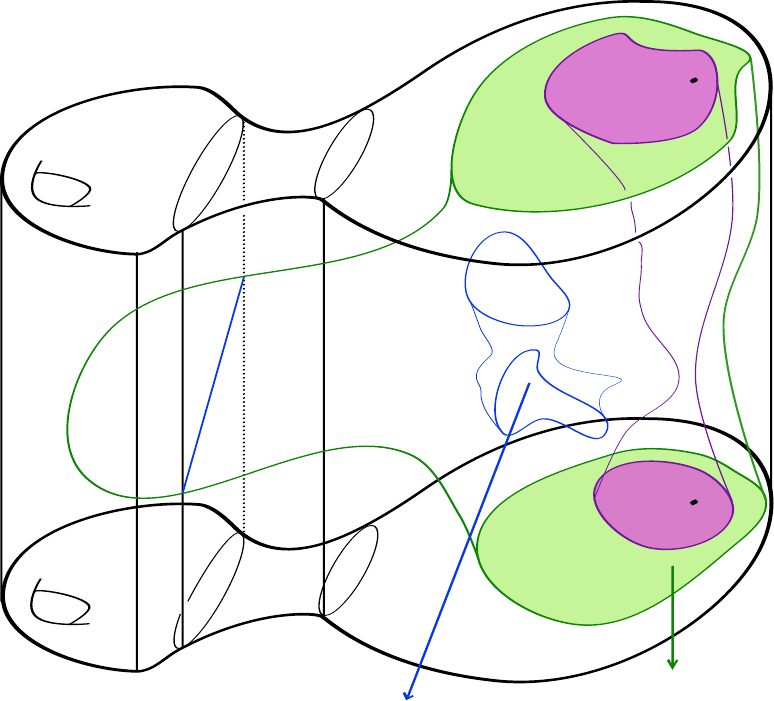}};
\node at (2.6,11.45) {$M \setminus \mathring{D'}$};
\node at (10.6,13) {$\mathcal{R} \setminus \mathring{D}$};
\node at (4.1,5.5) {$\textcolor{blue}{C}$};
\node at (11.4,11) {$\textcolor{violet}{U'}$};
\node at (10.6,9.6) {$\textcolor{Green}{U}$}; 
\node at (7.25,-0.3) {$\textcolor{blue}{H^{-1}(C)}$};
\node at (11.7,3.6){$\textcolor{violet}{\theta(g)(U')}$};
\node at (12.4,0.3) {$\textcolor{Green}{\theta(g)(U)}$};
\end{tikzpicture}
\caption{A schematic representation of the track $H$. The image $H(U \times [0,1])$ is in green lines. The compact set $C$ is colored in blue; its preimage under $H$ is bounded by blue lines. The closed neighborhood of infinity $U'$ is in violet; the picture shows how $H(U' \times[0,1])$ is entirely contained in $\mathcal{R}\setminus \mathring{D^4}$.}
\label{Injectivity}
\end{figure}
\end{proof}
Using the claim, we get that $\widetilde{\phi}$ is indeed an injection. Clearly, since $\widetilde{\phi}= r \circ \widetilde{\theta}$, the same holds for $\widetilde{\theta}$. This proves that $G$ injects in $\mathcal{D}(M \setminus p ) $ an $\mathcal{D}^{\infty}(M \setminus p)$, showing that both are uncountable.
\end{proof}

We started with once punctured closed, smooth 4-manifolds. The following corollary extends the result of Lemma \ref{theorem1} to give uncountably many smoothings having the desired property.

\begin{corollary} \label{corollary1}
Let $M$ be a closed, smooth  4-manifold and $p \in M$. Then there exist uncountably many smoothings of  $M \setminus p$ whose diffeotopy groups  $\mathcal{D}^{\infty}(M\setminus p)$ and $\mathcal{D}(M\setminus p)$ are uncountable.
\end{corollary}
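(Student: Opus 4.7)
The plan is to run the construction of Lemma~\ref{theorem1} simultaneously over the uncountable family of $\mathbb{R}^4$ homeomorphs that Lemma~\ref{Gompf1} actually supplies, and then to distinguish the resulting smoothings of $M\setminus p$ by looking at their unique end.

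First I would note that Lemma~\ref{Gompf1} produces not a single $\mathcal{R}$ but an uncountable family $\{\mathcal{R}_\alpha\}_{\alpha\in A}$ of pairwise non-diffeomorphic $\mathbb{R}^4$ homeomorphs, each carrying a $G$-action (for a fixed uncountable $G\in\mathcal{G}^{\ast}$) and each equipped with a $G$-fixed ray $\gamma_\alpha$ via Remark~\ref{gammainR}. For every $\alpha$, pick a disk $D_\alpha$ inside the $G$-fixed neighborhood of $\gamma_\alpha$ and form the smooth connected sum $M_\alpha := M \# \mathcal{R}_\alpha$ using $D_\alpha$; this is a smoothing of $M\setminus p$. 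The proof of Lemma~\ref{theorem1}, applied verbatim with $\mathcal{R}_\alpha$ in place of the single $\mathcal{R}$ used there, injects $G$ into both $\mathcal{D}(M_\alpha)$ and $\mathcal{D}^{\infty}(M_\alpha)$, so each of these diffeotopy groups is uncountable.

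It remains to extract uncountably many diffeomorphism types from $\{M_\alpha\}_{\alpha\in A}$. Since $M$ is closed, each $M_\alpha$ has exactly one end, and by the connected-sum construction that end is canonically identified with the end of $\mathcal{R}_\alpha$. Gompf's uncountable family in \cite[Theorem~4.4]{groupactions} is built by end-summing with different configurations drawn from Taubes's small exotic $\mathbb{R}^4$'s (Construction~\ref{constr}), and is distinguished by end invariants; in particular, distinct $\mathcal{R}_\alpha$ and $\mathcal{R}_\beta$ in the family have non-diffeomorphic ends. Hence $M_\alpha$ and $M_\beta$ have non-diffeomorphic ends whenever $\alpha\neq\beta$, so they are pairwise non-diffeomorphic smoothings of $M\setminus p$, and each carries the required uncountable diffeotopy groups.

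The delicate step is this end-distinction: we need that the uncountability produced in Lemma~\ref{Gompf1} is already visible at the end of $\mathcal{R}_\alpha$, and not merely as a global diffeomorphism invariant of $\mathcal{R}_\alpha$ as a whole. This should be read off from Construction~\ref{constr} (the end-sum building blocks persist into the end and are detected there by Taubes-type periodic-end obstructions), but in the write-up I would either cite the precise end-uncountability statement extracted from \cite{groupactions} and \cite{Taubes1987GaugeTO}, or argue directly that a hypothetical diffeomorphism $M_\alpha \to M_\beta$ would restrict on a suitable neighborhood of infinity to a diffeomorphism between the ends of $\mathcal{R}_\alpha$ and $\mathcal{R}_\beta$, contradicting their distinction.
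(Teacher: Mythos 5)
Your overall strategy is the same as the paper's: connected-sum $M$ with each member of an uncountable family of $\mathbb{R}^4$ homeomorphs carrying the $G$-action, observe that Lemma~\ref{theorem1} applies verbatim to each $M\#\mathcal{R}_\alpha$, and then distinguish the resulting smoothings of $M\setminus p$ by their unique end. However, there is a genuine gap at exactly the step you flag as delicate. Lemma~\ref{Gompf1} gives uncountably many \emph{diffeomorphism types} of homeomorphs $\mathcal{R}_\alpha$; it does not assert that distinct members of the family have non-diffeomorphic \emph{ends}, and the invariants used in \cite[Theorem~4.4]{groupactions} to separate them are a priori invariants of the whole manifold, not of its end alone. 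Your second fallback (a diffeomorphism $M_\alpha\to M_\beta$ would restrict near infinity to a diffeomorphism between the ends of $\mathcal{R}_\alpha$ and $\mathcal{R}_\beta$, ``contradicting their distinction'') presupposes precisely the end-distinction that has not been established, so as written it is circular.

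The paper closes this gap with a counting argument rather than a sharper end invariant: by \cite{anexoticmanagerie}, any fixed end is realized by at most countably many diffeomorphism types of $\mathbb{R}^4$ homeomorphs. Hence in an uncountable family of pairwise non-diffeomorphic $\mathcal{R}_t$'s, each end-diffeomorphism class accounts for only countably many members, so one may pass to an uncountable \emph{subfamily} whose ends are pairwise non-diffeomorphic. Since $M$ is closed, the end of $M\#\mathcal{R}_t$ is the end of $\mathcal{R}_t$, so this subfamily yields uncountably many distinct smoothings of $M\setminus p$, each satisfying Lemma~\ref{theorem1}. (The paper also cites \cite[Lemma~3.2]{groupactions} to guarantee that the model summand can be varied \emph{equivariantly}, so that every member of the family still carries the $G$-action; you assumed this, reasonably, from the statement of Lemma~\ref{Gompf1}.) Replacing your claim that all ends in the family are pairwise distinct by this pass-to-a-subfamily step would make your argument complete and essentially identical to the paper's.
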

\begin{proof}
Given an uncountable group $G \in \mathcal{G}^*$ (see Example \ref{exampleG} (1)) we can choose $\mathcal{R}$ from an \emph{uncountable} family of $\mathbb{R}^4$ homeomorphs $\{\mathcal{R}_t\}_t$ by equivariantly varying the model summand in the construction (see \cite[Lemma 3.2]{groupactions}).

Consider the family of smoothings of $M \setminus p$ given by $\{M \# \mathcal{R}_t\}_{t\in \mathbb{R}}$. Since only countably many  $\mathbb{R}^4$ homeomorphs can realize a given end \cite{anexoticmanagerie}, this uncountable family contains an uncountable subfamily of smoothings of $M \setminus p $ with pairwise non-diffeomorphic ends. Hence, this represents an uncountable family of distinct smoothings of $M \setminus p $ so that each one satisfies the properties of the above Lemma \ref{theorem1}.
\end{proof}
We proved the basic case. We can now state the result in full generality; the techniques we will use in the proof are analogous to the ones used in the proofs of Lemma \ref{theorem1} and Corollary \ref{corollary1}.
\begin{theorem}\label{new}
Let $M$ be a topological 4-manifold, $S \subsetneq \mathring{M}$ a non-empty discrete set of points. Then $M\setminus S$ admits uncountably many smoothings whose diffeotopy groups $\mathcal{D}^{\infty}(M\setminus S)$ and $\mathcal{D}(M\setminus S)$ are uncountable if:
\begin{enumerate}
    \item $M$ is smoothable or
    \item $M$ is non-smoothable and $|S| \ge 2$.
\end{enumerate}
\end{theorem}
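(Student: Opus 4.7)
The proof reduces both cases to a generalized form of Lemma \ref{theorem1} and Corollary \ref{corollary1} valid for non-closed smooth 4-manifolds. First, the reduction: pick any $p_0 \in S$ and set $S' := S \setminus \{p_0\}$ and $M_0 := M \setminus S'$, so that $M \setminus S = M_0 \setminus \{p_0\}$. In case (1) a fixed smoothing of $M$ restricts to a smoothing of the open submanifold $M_0$. In case (2), since $|S| \geq 2$ we have $S' \neq \emptyset$, so $M_0$ is an open topological 4-manifold (all removed points lying in the interior); by Freedman--Quinn, $M_0$ is smoothable, and we fix any smoothing. In both cases we are reduced to producing the desired uncountably many smoothings of $M_0 \setminus \{p_0\}$, where $M_0$ is a smooth 4-manifold (not necessarily closed) and $p_0 \in \mathring{M}_0$.

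Now carry out the construction of Lemma \ref{theorem1}: for an uncountable $G \in \mathcal{G}^*$, take an uncountable family $\{\mathcal{R}_t\}$ of $\mathbb{R}^4$ homeomorphs as in Corollary \ref{corollary1}, each carrying a $G$-action fixing a neighborhood of a ray $\gamma$. Form the smooth connected sum $M_0 \# \mathcal{R}_t$ at a disk $D' \subset M_0$ around $p_0$ and a disk $D \subset \mathcal{R}_t$ inside the $G$-fixed neighborhood; this yields a smoothing of $M \setminus S$. The $G$-action on $\mathcal{R}_t$ extends by the identity on $M_0 \setminus \mathring{D}'$ to an action $\widetilde{\theta}$ on $M_0 \# \mathcal{R}_t$, and, as in Lemma \ref{theorem1}, it suffices to prove that the induced map $\#^{\infty}$ on diffeotopy groups at infinity is injective.

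The main technical obstacle is that $M_0$ need not be compact, so the compactness step in the injectivity argument of Lemma \ref{theorem1} fails: the complement $M_0 \setminus \mathring{D}'$ is in general not compact, so $C := H(U \times [0,1]) \cap ((M_0 \setminus \mathring{D}') \times [0,1])$ need not be compact. The fix is to replace $M_0 \setminus \mathring{D}'$ with the compact separating sphere $S^3 = \partial D' = \partial D$. Setting $C' := H(U \times [0,1]) \cap (S^3 \times [0,1])$, properness of $H$ gives that $H^{-1}(C')$ is a compact subset of $U \times [0,1]$. Choosing a closed neighborhood of infinity $U' \subseteq U$ disjoint from the projection of $H^{-1}(C')$ to $U$, we have that for each $u \in U'$ the path $H(\{u\} \times [0,1])$ never crosses $S^3$; since it starts and ends in $\mathcal{R}_t \setminus \mathring{D}$, the entire path lies there. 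This yields a proper isotopy in $\mathcal{R}_t$ between $\theta(g)|_{U'}$ and the identity, contradicting $g \neq \mathrm{id}$ in $\mathcal{D}^{\infty}(\mathcal{R}_t)$ and giving $G \hookrightarrow \mathcal{D}(M \setminus S), \mathcal{D}^{\infty}(M \setminus S)$.

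Finally, that the family $\{M_0 \# \mathcal{R}_t\}$ contains an uncountable subfamily of pairwise non-diffeomorphic smoothings of $M \setminus S$ follows as in Corollary \ref{corollary1}: the end at $p_0$ of $M_0 \# \mathcal{R}_t$ is that of $\mathcal{R}_t$, and since only countably many $\mathbb{R}^4$ homeomorphs realize a given end, uncountably many values of $t$ produce smoothings with pairwise distinct ends at $p_0$, hence pairwise non-diffeomorphic smoothings.
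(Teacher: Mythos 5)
Your overall strategy is the same as the paper's: reduce both cases to the situation of a single puncture $p_0$ in a smooth, possibly non-compact $4$-manifold $M_0$ (using Quinn's smoothability of non-compact $4$-manifolds in case (2)), smooth $M_0\setminus p_0$ as $M_0\#\mathcal{R}_t$ with the connected sum performed inside the $G$-fixed neighborhood, extend the $G$-action by the identity, and prove injectivity of $\#^\infty$ by intersecting the track of the proper isotopy with a compact set separating the end at $p_0$ from the remaining ends. Your choice of the separating set as the gluing sphere $S^3=\partial D=\partial D'$ is a clean instance of the paper's compact separating set $K$, and your path-by-path connectedness argument (the path starts and ends in $\mathcal{R}_t\setminus\mathring{D}$ and never meets $S^3$, hence stays in that component) is correct; note only that you should observe, as you implicitly do, that points of $U'$ and their images under $\theta(g)$ lie off $S^3$, which follows from $U'$ being disjoint from the projection of $H^{-1}(C')$.

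There is, however, a genuine gap in your last paragraph. From ``uncountably many values of $t$ produce smoothings with pairwise distinct ends at $p_0$'' you conclude ``hence pairwise non-diffeomorphic smoothings,'' but this inference is invalid when $M_0$ is non-compact: a diffeomorphism $M_0\#\mathcal{R}_t\to M_0\#\mathcal{R}_{t'}$ induces only a bijection of ends and need not carry the end at $p_0$ to the end at $p_0$; it could carry the $\mathcal{R}_t$-end onto an end of $M$ itself or onto the end at another puncture of $S'$ (which in case (2) may itself be exotic). The paper closes this by observing that $M_0$ has at most countably many topologically collared $\mathbb{S}^3\times\mathbb{R}$ ends, so after discarding countably many parameters one obtains an uncountable subfamily $\{\mathcal{R}_u\}$ with pairwise non-diffeomorphic ends none of which is diffeomorphic to any other end of $M_0$; only then does distinctness of the ends at $p_0$ force the smoothings to be pairwise non-diffeomorphic. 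This is an easy repair, but as written your argument for ``uncountably many smoothings'' is incomplete.
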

\begin{remark}\label{noboundary}
We impose the assumption $S \subsetneq \mathring{M}$ in order to avoid removing points on the (possibly non-empty) boundary $\partial M$. Going through the proof of Lemma \ref{theorem1}, we see that we made almost no use of the empty boundary assumption. However if $p \in \partial M$, then the initial idea,  connect-summing with $\mathcal{R}$, needs to be modified. We have to take the boundary sum with $\mathbb{H}^4$, the upper half 4-space. We do not know how the diffeotopy groups of $\mathbb{H}^4$ behave (note that this is $D^4 \setminus pt $ where the point lies in the boundary $\mathbb{S}^3$, a case we will not cover) and so no direct conclusions can be made from this construction.
\end{remark}
\begin{proof}[Proof of Theorem \ref{new}]

We already covered case (1) when $|S|=1$ and $M$ is closed (by Remark \ref{noboundary} also compact with possibly non-empty boundary) in Lemma \ref{theorem1}. To prove Theorem \ref{new}, it suffices to prove case (1) for non-compact 4-manifolds; if $M$ is a \emph{smooth, compact} 4-manifold, $|S| \ge 2$ then apply the result to $M' := M \setminus p$, in which case $M'$ is non-compact, and $S' := S \setminus p$, where $p \in S$. Similarly, if $M$ is a  non-smoothable 4-manifold (2) follows from (1) applied to $M' := M \setminus p$ (by Quinn's \emph{smoothability theorem} \cite{endsofmaps} $M \setminus p$ has a smooth structure) and $S' := S \setminus p$, where $p \in S$.

We first prove (1) for $M$ a non-compact 4-manifold and $|S|=1$. Once we have this, we will show how to obtain the general result. 
\begin{claim}
Let $M$ be a non-compact 4-manifold and $p \in \mathring{M}$. Then there exist uncountably many smoothings of  $M\setminus p$ whose diffeotopy groups  $\mathcal{D}^{\infty}(M\setminus p)$ and $\mathcal{D}(M\setminus p)$ are uncountable. 
\end{claim}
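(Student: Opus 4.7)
The plan is to mirror the proof of Lemma \ref{theorem1} as closely as possible, with one essential modification to handle the non-compactness of $M$. The setup is identical: fix an uncountable $G \in \mathcal{G}^*$, use Lemma \ref{Gompf1} and Remark \ref{gammainR} to produce an $\mathbb{R}^4$ homeomorph $\mathcal{R}$ with a $G$-action $\theta$ fixing pointwise a neighborhood of a properly embedded ray $\gamma \subseteq \mathcal{R}$, fix a smooth structure on $M$, pick a smooth $4$-disk $D' \subseteq M$ around $p$ and a smooth $4$-disk $D$ inside the $G$-fixed neighborhood of $\gamma$, and smooth $M \setminus p \approx M \# \mathcal{R}$ as the smooth connected sum along $\partial D' \approx \partial D$. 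Extending each $\theta(g)$ by the identity on $M \setminus \mathring{D'}$ yields $\widetilde{\theta} \colon G \to \mathcal{D}(M \setminus p)$ and then $\widetilde{\phi} = r \circ \widetilde{\theta} \colon G \to \mathcal{D}^{\infty}(M \setminus p)$, fitting into the same commutative diagram as in Lemma \ref{theorem1}; the whole question reduces, exactly as before, to showing that $\#^{\infty}$ is injective on $\phi(G)$.

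The only place where the proof of Lemma \ref{theorem1} used closedness of $M$ is the step asserting that $M \setminus \mathring{D'}$ is compact. I plan to replace the role of this compact set by the \emph{connected-sum neck} $N \subseteq M \# \mathcal{R}$, i.e.\ a compact bicollar of the identified $3$-sphere $\partial D' \approx \partial D$ separating $M \# \mathcal{R}$ into an $M$-side and an $\mathcal{R}$-side. Assuming $\#^{\infty}(\phi(g)) = id$ for some $g \in G$, extract as in Lemma \ref{theorem1} a proper isotopy $H \colon U_p \times [0,1] \to M \setminus p$ between $\theta(g)|_{U_p}$ and $id|_{U_p}$, where $U_p$ is a connected closed neighborhood of the puncture end $\epsilon_p$ chosen inside $\mathcal{R} \setminus \mathring{D}$. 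Properness of $H$ together with compactness of $N$ gives that $H^{-1}(N)$ is compact, and hence its projection $K \subseteq U_p$ is compact.

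Choose then a connected closed neighborhood $U_p' \subseteq U_p$ of $\epsilon_p$ disjoint from $K$ --- possible because $\epsilon_p$ admits a cofinal sequence of connected closed neighborhoods. Then $H(U_p' \times [0,1])$ avoids $N$, so it lies in a single component of $(M \# \mathcal{R}) \setminus N$; since both boundary values $H_0 = \theta(g)|_{U_p'}$ and $H_1 = id|_{U_p'}$ land in the $\mathcal{R}$-side, connectedness of $U_p' \times [0,1]$ forces the entire track to do so, giving a proper isotopy in $\mathcal{R}$ between $\theta(g)|_{U_p'}$ and $id$. Thus $\phi(g) = id \in \mathcal{D}^{\infty}(\mathcal{R})$, contradicting injectivity of $\phi$ from Lemma \ref{Gompf1}, so $\#^{\infty}$ is injective and hence so are $\widetilde{\phi}$ and $\widetilde{\theta}$. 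Finally, varying $\mathcal{R}$ over an uncountable family $\{\mathcal{R}_t\}_t$ as in Corollary \ref{corollary1}, and invoking that only countably many $\mathbb{R}^4$ homeomorphs realize a given end, upgrades this single smoothing to an uncountable family of pairwise non-diffeomorphic ones.

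The main obstacle is exactly the loss of compactness of $M \setminus \mathring{D'}$: without it, the hypothetical proper isotopy could a priori wander arbitrarily far into the $M$-side, so one must produce a compact ``separator'' that is intrinsic to the connected-sum construction. The neck $N$ is such a separator, and once compactness of $H^{-1}(N)$ has been extracted from properness of $H$, the subsequent component argument is a direct transcription of the closed case.
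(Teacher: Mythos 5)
Your argument is essentially the paper's own proof: your connected-sum neck $N$ plays exactly the role of the compact set $K$ that the paper chooses to separate the closed neighborhood of $\epsilon_p$ from the other ends of $M\setminus p$, and the subsequent steps (properness of $H$ gives compactness of $H^{-1}(N)$, shrink to $U_p'$, connectedness of $U_p'\times[0,1]$ plus $H_1=id$ forces the track into the $\mathcal{R}$-side) are a direct transcription of the paper's injectivity argument, as is the reduction to injectivity of $\#^{\infty}$ via the commutative diagram.

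The one place you are slightly too quick is the final uncountability upgrade. In the closed case of Corollary \ref{corollary1} it suffices to take $\{\mathcal{R}_t\}$ with pairwise non-diffeomorphic ends, because $M\#\mathcal{R}_t$ has a single end. Here $M\setminus p$ may have many ends, and a diffeomorphism $M\#\mathcal{R}_s\to M\#\mathcal{R}_t$ need not carry the puncture end to the puncture end: it could a priori match the end of $\mathcal{R}_s$ with one of the original ends of $M$. So "pairwise non-diffeomorphic ends of the $\mathcal{R}_t$" alone does not immediately give pairwise non-diffeomorphic smoothings. The paper closes this by observing that $M$ can have only countably many $\mathbb{S}^3\times\mathbb{R}$-collared ends, and passing to a further uncountable subfamily $\{\mathcal{R}_u\}$ none of whose ends is diffeomorphic to an end of $M$; you should add this one sentence to your last step.
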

\begin{proof}[Proof of the claim]
The proof follows that of  Lemma \ref{theorem1} with the natural modifications. Note that $K:= \partial D'$ separates $\epsilon_p$ from all the other ends of $M$. 

To show there are uncountably many distinct smoothings of $M \setminus p$ satisfying this property we work as in Corollary \ref{corollary1}. 
Consider the uncountable family of $\mathbb{R}^4$ homeomorphs  $\{\mathcal{R}_t\}_{t\in \mathbb{R}}$ with uncountable diffeotopy groups. Take the uncountable subfamily $\{\mathcal{R}_s\}_{s \in \mathbb{R}}$ having pairwise non-diffeomorphic ends. As in \cite{anexoticmanagerie}, $M$ can only have countably many $\mathbb{S}^3 \times \mathbb{R}$ collared ends. So there is another subfamily $\{\mathcal{R}_u\}_{u\in \mathbb{R}}$, with pairwise non-diffeomorphic ends, so that no end has the same diffeomorphism type of an end of $M$. 
Consider then the family of smoothings $\{M \# \mathcal{R}_u\}_{u \in \mathbb{R}}$. This realizes an uncountable family of distinct smoothings of $M \setminus p $ each one satisfying the desired property for $\mathcal{D}(M \setminus p)$ and $\mathcal{D}^{\infty}(M \setminus p)$. This completes the proof of the claim.
\end{proof}

We have proved (1) when $M$ is non-compact and $|S|=1$; if $S$ is an arbitrary discrete subset, the result follows from the claim. More precisely, let $S$ be discrete and $M$ non-compact. Take $p \in S$, define $S'$ to be $S \setminus p$ and $M':= M \setminus S'$. Then $M'$ is non-compact. By the claim, $M'\setminus p = M \setminus S$ has uncountable $\mathcal{D}(M' \setminus p)$ and $\mathcal{D}^{\infty}(M' \setminus p)$ (since the set $S$ is discrete, we can find $K$ as in the proof of the claim separating the end $\epsilon_p$ from the other punctured ends) and this completes the proof of Theorem \ref{new}.
\end{proof}

\begin{example}
Theorem \ref{new} implies that $\mathbb{S}^4\setminus \{p,q\}$ with $p \neq q \in \mathbb{S}^4$, which is homeomorphic to $\mathbb{S}^3 \times \mathbb{R}$, has uncountably many smoothings whose diffeotopy groups\\ $\mathcal{D}(\mathbb{S}^4\setminus \{p,q\})$ and $\mathcal{D}^{\infty}(\mathbb{S}^4\setminus \{p,q\})$ are uncountable.
\end{example}

In the proof of Theorem \ref{new} we constructed a $G$-action on $\mathcal{D}^{\epsilon_p}(M \setminus p)$. By carefully modifying the construction, we obtain the following corollary.
\begin{corollary}\label{corollary2}
Let $G \in \mathcal{G}^*$ be uncountable. Let $M$ be a topological 4-manifold, $S \subsetneq \mathring{M}$ a non-empty discrete set of points. Call $\Lambda$ the index set of $S$. Let \textbf{p1} and \textbf{p2} be the following properties:
\begin{itemize}
    \item \textbf{p1}: \emph{The direct product $\prod_{i \in \Lambda} G$ injects into the diffeotopy groups $\mathcal{D}(M\setminus S)$ and  $\mathcal{D}^{\infty}(M\setminus S)$}.
    \item \textbf{p2}: \emph{The direct product $G \times G \cdots \times G = G^{|S|-1}$, where $S$ is finite, injects into the diffeotopy groups $\mathcal{D}(M\setminus S)$ and  $\mathcal{D}^{\infty}(M\setminus S)$}.
\end{itemize}

Then there are uncountably many smoothings of $M \setminus S$ for which property \textbf{p1} holds if $M$ is a smoothable 4-manifold.

Moreover, there are uncountable many smoothings of $M \setminus S$ for which property \textbf{p2} holds when $M$ is a non-smoothable compact 4-manifold and $|S| \ge 2$.
\end{corollary}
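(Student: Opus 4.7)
The plan is to iterate the construction of Lemma \ref{theorem1} at every puncture simultaneously. Starting from a smoothing of $M$ (which exists by hypothesis in claim 1, or is obtained via Quinn's theorem after removing one auxiliary puncture in claim 2), choose for each $p_i \in S$ an exotic $\mathbb{R}^4$-homeomorph $\mathcal{R}_i$ as in Lemma \ref{Gompf1}, carrying a $G$-action $\theta_i$ that fixes pointwise a neighborhood of a ray $\gamma_i \subset \mathcal{R}_i$. Smooth $M \setminus S$ as the iterated smooth connected sum $M \#_{p_1} \mathcal{R}_1 \#_{p_2} \mathcal{R}_2 \#_{p_3} \cdots$, performing each sum inside the $G$-fixed disk $D_i$ near $\gamma_i$. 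Since $S$ is discrete, the $D_i'$'s and the regions supporting the $\mathcal{R}_i$'s can be chosen pairwise disjoint in $M \setminus S$, so this is a well-defined smooth manifold.

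Given $(g_i)_{i \in \Lambda} \in \prod_{i \in \Lambda} G$, define $\widetilde{\theta}((g_i))$ on $M\setminus S$ by applying $\theta_i(g_i)$ on each attached copy of $\mathcal{R}_i \setminus \mathring{D}_i$ and extending by the identity everywhere else (just as in the single-puncture construction $\#_p$ from the proof of Theorem \ref{new}). Since these individual extensions have pairwise disjoint support, they commute, so this prescription assembles into a group homomorphism $\widetilde{\theta}\colon \prod_{i \in \Lambda} G \to \mathcal{D}(M\setminus S)$. Composing with $r$ gives a homomorphism into $\mathcal{D}^{\infty}(M\setminus S)$.

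To prove injectivity, suppose $\widetilde{\theta}((g_i))$ is trivial in $\mathcal{D}(M\setminus S)$ (or in $\mathcal{D}^{\infty}(M\setminus S)$). For each fixed $i$, project to the subgroup $\mathcal{D}^{\epsilon_{p_i}}(M\setminus S) \subseteq \mathcal{D}^{\infty}(M\setminus S)$ of Remark \ref{remarksubgroup}. Since $S$ is discrete, one can choose a neighborhood $U$ of $\epsilon_{p_i}$ contained in the attached copy of $\mathcal{R}_i \setminus \mathring{D}_i$ and disjoint from the supports of all $\widetilde{\theta}(g_j)$ with $j \neq i$; the restriction of $\widetilde{\theta}((g_i))$ to $U$ is then exactly $\theta_i(g_i)|_U$. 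Hence the composite $\prod_{j} G \xrightarrow{\widetilde{\theta}} \mathcal{D}(M\setminus S) \to \mathcal{D}^{\epsilon_{p_i}}(M\setminus S)$ factors through the $i$-th projection $\prod_j G \to G$ followed by the single-puncture map $\#_{p_i}^{\infty} \circ \theta_i$, which was already shown injective in the proof of Theorem \ref{new} via the track-of-isotopy argument with the separating compact set $K$. Running this argument for every $i$ forces $g_i = e$ for all $i$, establishing property p1.

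Uncountably many smoothings with property p1 are produced exactly as in Corollary \ref{corollary1} and the end of the proof of Theorem \ref{new}: vary a single distinguished summand $\mathcal{R}_{i_0}$ through Gompf's uncountable family $\{\mathcal{R}_t\}_{t \in \mathbb{R}}$, and restrict to the uncountable subfamily whose ends are pairwise non-diffeomorphic and distinct from the countably many ends already present in $M$ or in the other $\mathcal{R}_j$'s (using \cite{anexoticmanagerie}). For claim 2, let $p_0 \in S$, apply Quinn's smoothability theorem to smooth $M' := M \setminus p_0$, and apply claim 1 to $(M', S \setminus \{p_0\})$; this yields $G^{|S|-1}$ injecting into the diffeotopy groups of $M\setminus S = M' \setminus (S\setminus\{p_0\})$. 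The main obstacle to watch is ensuring the single-puncture argument from Theorem \ref{new} goes through in the presence of other punctures: this reduces to the discreteness of $S$ providing the compact separator $K$ between the end $\epsilon_{p_i}$ and the remaining ends, precisely as in the non-compact case handled inside the proof of Theorem \ref{new}.
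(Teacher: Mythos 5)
Your proposal is correct and follows essentially the same route as the paper: build the smoothing by connect-summing a $G$-equivariant $\mathcal{R}$ at each puncture along the $G$-fixed disks (the paper does this iteratively, you do it simultaneously, but the resulting smoothing and action coincide), reduce injectivity of $\prod_{i\in\Lambda}G$ to the single-puncture injection into $\mathcal{D}^{\epsilon_{p_i}}$ via the separating compact set and Remark~\ref{remarkdirectproduct}, and get uncountably many smoothings by varying a summand, with case \textbf{p2} obtained from \textbf{p1} after removing one point by Quinn's theorem. No gaps relative to the paper's argument.
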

\begin{proof}
The cases $|S|=1$ for \textbf{p1} and $|S|=2$ for \textbf{p2} follow from Theorem \ref{new}. We just need to prove the \textbf{p1} case for $|S| \ge 2$; the \textbf{p2} case descends from it.

We start with a non-compact 4-manifold $M$. Fix an uncountable $G \in \mathcal{G}^*$. Let $M$ be  non-compact, $S$ discrete, $|S|\ge 2$. The index set $\Lambda$ must be countable otherwise $S$ would not be discrete, so we can identify it with a subset of  $\mathbb{Z}^+$. Start with $p_1 \in S$. In the proof of Theorem \ref{new} we constructed a smoothing of $M \setminus p_1$ with an injection $G \hookrightarrow \mathcal{D}^{\epsilon_{p_1}}(M \setminus p_1)$. Fix this smoothing and call the resulting smooth manifold $M'$. Now take $p_2 \in S$ and smooth $M'\setminus p_2$ as in the proof of Theorem \ref{new} so that $G$ injects into $\mathcal{D}^{\epsilon_{p_2}}(M \setminus \{p_1, p_2 \})$. Since $S$ is discrete we can choose the disk $D'$ away from $p_1$ to perform the connected sum with $\mathcal{R}$. The smooth structure is not changed near the puncture $p_1$. Hence 
$G$ injects in  $\mathcal{D}^{\epsilon_{p_1}}(M \setminus \{p_1, p_2\})$ as well.

Now continue the procedure for each index $i \in \Lambda$. In the end we get a smoothing of $M \setminus S$ so that $G$ injects into $\mathcal{D}^{\epsilon_{p_i}}(M \setminus S)$ for each $i \in \Lambda$. The direct product $\prod_{i \in \Lambda} \mathcal{D}^{\epsilon_{p_i}}(M \setminus S)$ is a subgroup of $\mathcal{D}^{\infty}(M \setminus S)$ (see Remark \ref{remarkdirectproduct}). Hence we get that the direct product $\prod_{i \in \Lambda} G$ injects into $\mathcal{D}^{\infty}(M \setminus S)$.
If $M$ is compact and smoothable, the proof is analogous.

As for \textbf{p2}, start with $M$ a non-smoothable compact 4-manifold, $|S| \ge 3$. Take $p \in S$ and define $M':= M \setminus p$, $S'= S \setminus p$. Then $M'$ is non-compact and $|S'| \ge 2$. Apply the case of \textbf{p1} to $M'$ and $S'$ to get that $G^{|S'|}$ injects into $\mathcal{D}(M'\setminus S')$ and $\mathcal{D}^{\infty}(M'\setminus S')$. Note that $|S'| = |S|-1$. Then use the equality $M' \setminus S' = M \setminus S$.
To get uncountability in both cases, choose different $\mathcal{R}$ summands in the construction as we did in the proof of Theorem \ref{new}. Thus we can conclude.
\end{proof}

\begin{remark}
If we take in consideration different uncountable groups $G_i \in \mathcal{G}^{*}$ for $i \in \Lambda$, where $\Lambda $ is the index set of $S$, then we can use the proof of Corollary \ref{corollary2} combining more group actions together. This would show that for $\{G_i\}_i$ a family in $\mathcal{G}^*$, the same results as in Corollary \ref{corollary2} apply for the  product $\prod_{i\in \Lambda} G_i$.
\end{remark}

\section{Proof of Theorem \ref{new1}}

The goal of this section is to extend the following result to general punctured 4-manifolds.

\begin{lemma}[{\cite[Theorem 5.1]{groupactions}}]\label{Gompf2}
Every diffeomorphism of the end of $\mathcal{R}_U$ extends over $\mathcal{R}_U$. That is, the homomorphism $r \colon \mathcal{D}(\mathcal{R}_U) \to \mathcal{D}^{\infty}(\mathcal{R}_U)$ is surjective.
\end{lemma}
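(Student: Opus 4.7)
The plan is to exploit the Freedman--Taylor universal property of $\mathcal{R}_U$: every smoothing of $\mathbb{R}^4$ embeds in $\mathcal{R}_U$, and end-summing any exotic $\mathbb{R}^4$ with $\mathcal{R}_U$ reproduces $\mathcal{R}_U$ up to diffeomorphism. More strongly, if $X$ is a smoothing of $\mathbb{R}^4$ admitting a closed neighborhood of infinity smoothly diffeomorphic to some closed neighborhood of infinity of $\mathcal{R}_U$, then $X \cong \mathcal{R}_U$ as smooth manifolds. My strategy is to cut and paste along a smoothly embedded $3$-sphere and invoke this rigidity to convert the given diffeomorphism at infinity into a global self-diffeomorphism.

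Given a class $[f]\in \mathcal{D}^{\infty}(\mathcal{R}_U)$ represented by $f\colon U \to U'$, I would first refine the representative so that $K := \overline{\mathcal{R}_U \setminus U}$ is a compact smooth codimension-$0$ submanifold with smooth boundary $\partial K = \partial U \cong \mathbb{S}^3$, and analogously set $K' := \overline{\mathcal{R}_U \setminus U'}$. The map $f$ then restricts to a diffeomorphism $\partial K \to \partial K'$, and we can form the new smooth $4$-manifold
\[
X \;:=\; K \,\cup_{f|_{\partial K}}\, U'.
\]
Topologically, $X \approx \mathbb{R}^4$, since we have merely swapped one $\mathbb{S}^3\times[0,\infty)$-type end-collar for another homeomorphic one. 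Smoothly, $X$ admits $U' \subset \mathcal{R}_U$ as a closed neighborhood of infinity, identical to that of $\mathcal{R}_U$ itself, so the universal rigidity above supplies a diffeomorphism $\Phi\colon X \to \mathcal{R}_U$. There is also a tautological diffeomorphism $g\colon \mathcal{R}_U \to X$ given by the identity on $K$ and by $f$ on $U$. Setting $F := \Phi \circ g$ then produces a self-diffeomorphism of $\mathcal{R}_U$ whose restriction to $U$ equals $\Phi|_{U'}\circ f$.

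To conclude that $F$ actually represents the class $[f]$, I need to arrange that $\Phi|_{U''}$ is properly isotopic to the inclusion $U''\hookrightarrow \mathcal{R}_U$ on some smaller closed neighborhood of infinity $U''\subset U'$. This is the main obstacle: the universal diffeomorphism $\Phi$ is not canonical, so in general $F$ need not literally extend $f$ but only some diffeomorphism at infinity obtained from $f$ by precomposition with $\Phi|_{U'}$. I would attack this step by upgrading the rigidity statement to a \emph{relative} one, using that the end of $\mathcal{R}_U$ is built from an infinite end-sum of copies of exotic $\mathbb{R}^4$'s; a Mazur-type swindle absorbed into this end-sum structure should allow $\Phi$ to be chosen identity outside a compact set, so that $\Phi|_{U''}\simeq\mathrm{id}_{U''}$ through proper isotopies. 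Making this last isotopy argument precise — and in particular ensuring it can be taken relative to the compact $K$ — is the delicate heart of the proof; once granted, surjectivity of $r\colon \mathcal{D}(\mathcal{R}_U)\to\mathcal{D}^{\infty}(\mathcal{R}_U)$ follows immediately.
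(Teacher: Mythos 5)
First, a point of reference: the paper does not prove this lemma at all --- it is imported verbatim from Gompf (\cite[Theorem 5.1]{groupactions}), whose proof goes through the Freedman--Taylor machinery underlying the universal property of $\mathcal{R}_U$ (handle decompositions without $3$- and $4$-handles and absorption arguments in the end-sum structure). So the comparison here is between your sketch and Gompf's argument, and your sketch has genuine gaps at each of its three stages.

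(i) The opening reduction --- shrinking the representative $f\colon U \to U'$ so that $\partial U$ is a smoothly embedded $\mathbb{S}^3$ --- is unjustified. A closed neighborhood of infinity is only required to be a closed codimension-$0$ submanifold with compact complement closure; nothing constrains its boundary, and since the given representative may be defined only far out toward infinity, you would need a smoothly embedded $3$-sphere enclosing an arbitrarily large compact subset of $\mathcal{R}_U$. For large exotic $\mathbb{R}^4$'s this is exactly the kind of statement that fails: the standard Taubes-style proofs of exoticness exhibit compact sets that no smoothly embedded $3$-sphere separates from the end, and $\mathcal{R}_U$ is built by absorbing such examples, so a cofinal family of smooth separating $3$-spheres cannot be assumed. (ii) The ``rigidity'' you invoke --- that any smoothing of $\mathbb{R}^4$ sharing a closed neighborhood of infinity with $\mathcal{R}_U$ is diffeomorphic to $\mathcal{R}_U$ --- is a strict strengthening of the Freedman--Taylor property $R \natural \mathcal{R}_U \cong \mathcal{R}_U$ and is not a known theorem; the only fact of this type used in the present paper is the much weaker statement that at most countably many diffeomorphism types realize a given end. (iii) Most seriously, the step you yourself flag as the ``delicate heart'' is circular. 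By construction $X = K \cup_{f|_{\partial K}} U'$, so a diffeomorphism $\Phi\colon X \to \mathcal{R}_U$ that is the identity (or properly isotopic to it) on a neighborhood of infinity is precisely a global diffeomorphism of $\mathcal{R}_U$ extending the regluing by $f$ --- that is, it is equivalent to the conclusion you are trying to prove. Without controlling $\Phi$ at infinity, the global map $F = \Phi \circ g$ only shows that $[\Phi|_{\infty}]\cdot[f]$ lies in the image of $r$, which says nothing about $[f]$ itself. A ``Mazur-type swindle'' is not a repair here but a placeholder for the entire content of the theorem; this is why the result has to be quoted from Gompf rather than rederived by cut-and-paste.
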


We will be able to achieve a similar statement; in the case of non-compact 4-manifolds, however, we shall restrict only to diffeomorphisms of the punctured ends since the other ends might have a wild behavior that we cannot detect with the tools we have developed so far. As an example, there might be an end that is not topologically collared by any $M \times \mathbb{R}$, where $M$ is a compact 3-manifold.

As we did for Lemma \ref{theorem1}, we begin with the case of smooth, compact 4-manifolds. The starting point of the proof will be again a connected sum argument. As we previously saw (see Remark \ref{noboundary}), as long as the point is assumed to be taken out from the interior of the manifold, we can allow the manifold $M$ to have non-empty boundary.

In the following discussion, if $M$ is orientable, then we have to restrict to the orientation-preserving diffeomorphism of the ends, otherwise the \emph{Cerf-Palais disk theorem} \cite{Cerf,Palais} (which we use in the proof of Lemma \ref{theorem3}) does not hold. We will however denote the groups by the usual $\mathcal{D}$, suppressing the $_+$ subscript, to ease the notation.

\begin{lemma}\label{theorem3}
Let $M$ be a smooth compact 4-manifold and $p \in \mathring{M}$. Then there is a smoothing of $M \setminus p$ so that every diffeomorphism at infinity of $M\setminus p$ extends over $M\setminus p$. That is, the homomorphism $r \colon \mathcal{D}(M\setminus p) \to \mathcal{D}^{\infty}(M\setminus p)$ is surjective.
\end{lemma}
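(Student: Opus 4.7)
The plan is to mirror Lemma~\ref{theorem1} with $\mathcal{R}_U$ as the model summand and Lemma~\ref{Gompf2} as the input. First, I would smooth $M\setminus p$ as the smooth connected sum $M\#\mathcal{R}_U$, using a disk $D'\subset M$ around $p$ and a disk $D\subset\mathcal{R}_U$. Since $M$ is compact, the resulting smooth 4-manifold has a single end, which is the end of $\mathcal{R}_U$.

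Given $[f]\in\mathcal{D}^\infty(M\setminus p)$, I would pick a representative $f\colon Y\to f(Y)$ between closed neighborhoods of infinity. Because $M\setminus\mathring{D'}$ is compact and $f$ is a diffeomorphism at infinity, by shrinking $Y$ one can arrange that both $Y$ and $f(Y)$ lie entirely inside $\mathcal{R}_U\setminus\mathring{D}$ (a compactness argument analogous in spirit to the track argument in the claim of Lemma~\ref{theorem1}). Viewed inside $\mathcal{R}_U$, such $Y$ and $f(Y)$ are themselves closed neighborhoods of infinity, so $f|_Y$ defines a class $[\tilde{f}]\in\mathcal{D}^\infty(\mathcal{R}_U)$. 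Lemma~\ref{Gompf2} then produces a self-diffeomorphism $F\colon\mathcal{R}_U\to\mathcal{R}_U$ with $r(F)=[\tilde{f}]$; concretely, $F$ agrees with $f|_Y$ outside some compact subset of $\mathcal{R}_U$.

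The heart of the proof is upgrading $F$ to a self-diffeomorphism $\widetilde{F}$ of $M\#\mathcal{R}_U$ whose class at infinity is still $[f]$; the naive attempt fails because $F$ need not preserve the disk $D$ used in the connected sum. My plan is to first ambient-isotope $F$ through a compactly supported isotopy so that $F(D)=D$, which is allowed by the uniqueness up to ambient isotopy of smoothly embedded 4-disks in a connected smooth 4-manifold (Palais). The boundary restriction $F|_{\partial D}\in\mathrm{Diff}(S^3)$ is then isotopic to the identity (for orientation-preserving $f$) by Hatcher's proof of the Smale conjecture, $\mathrm{Diff}^+(S^3)\simeq SO(4)$. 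Absorbing such an isotopy into a collar of $\partial D$ inside $\mathcal{R}_U\setminus\mathring{D}$, one can further arrange $F|_{\partial D}=\mathrm{id}$ without changing $F$ away from the collar. Gluing the modified $F|_{\mathcal{R}_U\setminus\mathring{D}}$ to the identity on $M\setminus\mathring{D'}$ along $\partial D=\partial D'$ then produces $\widetilde{F}$; since all the modifications are compactly supported, $\widetilde{F}$ still agrees with $f$ near infinity, so $r(\widetilde{F})=[f]$, as required.

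The main obstacle I anticipate is this final trimming of $F$. The disk-uniqueness and collar-modification steps are clean for orientation-preserving $f$. For orientation-reversing classes, $F|_{\partial D}$ is only isotopic to a reflection, and extending a reflection of $S^3=\partial D'$ over the compact 4-manifold-with-boundary $M\setminus\mathring{D'}$ requires $M$ to admit an orientation-reversing self-diffeomorphism; presumably this subtlety is exactly what the footnote in the introduction about the precise meaning of \emph{universal behavior} is alluding to, and it will either be handled by a separate orientation-aware argument or circumvented by restricting to an appropriate subgroup.
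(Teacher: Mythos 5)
Your proposal follows essentially the same route as the paper: smooth $M\setminus p$ as $M\#\mathcal{R}_U$, shrink a representative until it restricts to a diffeomorphism at infinity of $\mathcal{R}_U$, extend it over $\mathcal{R}_U$ by Lemma~\ref{Gompf2}, normalize the resulting diffeomorphism on the connect-sum disk, and glue with the identity on $M\setminus\mathring{D'}$. The only real difference is that the paper invokes the Cerf--Palais disk theorem in its parametrized form, producing an ambient isotopy carrying $\phi|_{D}$ to the inclusion \emph{pointwise}, so that $\Phi|_{\partial D}=\mathrm{id}$ comes for free and your extra appeal to Hatcher's theorem on $\mathrm{Diff}(S^3)$ is unnecessary; the orientation-reversing subtlety you flag is a genuine one, but it is likewise left implicit in the paper's own proof.
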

\begin{proof}
 Smooth $M \setminus p$ as the smooth connected sum $M \# \mathcal{R}_U$, where $\mathcal{R}_U$ is Freedman and Taylor's universal $\mathbb{R}^4$ \cite{universal}.
 We want to show that  $r \colon \mathcal{D}(M \setminus p) \to \mathcal{D}^{\infty}(M\setminus p)$ is surjective.

Consider $\phi \in \mathcal{D}^{\infty}(M \setminus p)$ and pick $\phi$ as a representative of the class. Then $\phi$ is a proper embedding of $V$, a closed neighborhood of infinity of $M \setminus p$. By shrinking $V$ if necessary, we can assume that it lies entirely in the $\mathcal{R}_U\setminus \mathring{D}$ part. Call $\phi(V)=: V'$.

Our goal is to restrict $\phi$ to a diffeomorphism at infinity of $\mathcal{R}_U$. At present, $V'$ is a closed neighborhood of infinity of $M \setminus p$, but it might intersect the $M^4\setminus \mathring{D'}$ part non-trivially. 

As in Lemma \ref{theorem1}, let $K:= \partial D'$ be the boundary of the gluing disk. $V' \cap K$ is compact, call it $C$. $\phi^{-1}(C)$ is then a compact subset of $V$ since $\phi$ is proper. Exhaust $V$ by compact subsets $K_n$; then there exists an $N \in \mathbb{N}$ so that $\phi^{-1}(C) \subseteq K_N$. We can find $T \subseteq V$, a closed neighborhood of infinity of $M \setminus p$, and, since $V \subsetneq \mathcal{R}_U\setminus \mathring{D}$, also of $\mathcal{R}_U$, so that the image under $\phi$ lies entirely in $\mathcal{R}_U \setminus \mathring{D^4}$ (see Figure \ref{InsideU}).
\begin{figure}[htb]
\centering
\begin{tikzpicture}
%\draw[step=1cm,color=gray] (0,0) grid (14,8);%Uncomment this to get some helpful grid lines
\node[anchor=south west,inner sep=0] at (0,0){\includegraphics[width=14cm]{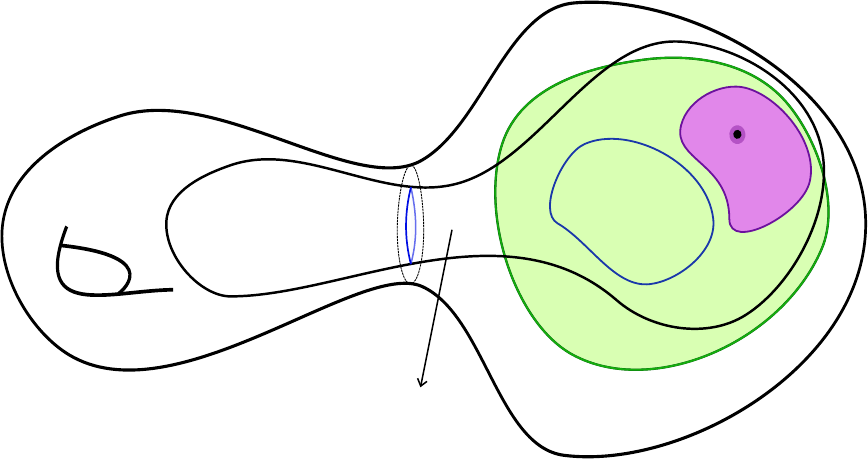}};

\node at (2.1,6) {$M \setminus \mathring{D'}$};
\node at (9.4,7.8) {$\mathcal{R}_U \setminus \mathring{D}$};
\node at (6.8,0.9) {$V'$};
\node at (12.3,4.7) {$\textcolor{violet}{T}$};
\node at (10.3,5.7) {$\textcolor{Green}{V}$}; 
\node at (7,3.8) {$\textcolor{Blue}{C}$};
\node at (10.3,4.5){$\textcolor{Blue}{\phi^{-1}(C)}$};
\end{tikzpicture}
\caption{Restricting $\phi$ to a diffeomorphism at infinity of $\mathcal{R}_U$. The intersection $C$ between $V' = \phi(V)$ and $\partial D$ is colored  in blue. The closed neighborhood of infinity $T$ whose image under $\phi$ is contained in $\mathcal{R}_U\setminus \mathring{D}$ is colored in violet.}.
\label{InsideU}
\end{figure}

 Here $\phi$ defines a diffeomorphism at infinity of $\mathcal{R}_U \setminus \mathring{D}$, hence also of $\mathcal{R}_U$. By Lemma \ref{Gompf2}, $\phi$ can be extended to a diffeomorphism of $\mathcal{R}_U$.  For ease of notation, call it $\phi$ as well.

 The map $\phi$ defines a smooth embedding of $D$, the disk used to perform the connected sum, inside $\mathcal{R}_U$. By the \emph{Cerf-Palais disk theorem} \cite{Cerf,Palais}, there is an ambient isotopy $H$ of $\mathcal{R}_U$ between the two embeddings  $\phi|_{D}: D \hookrightarrow \mathcal{R}_U$ and $\nu: D \hookrightarrow \mathcal{R}_U$, where $\nu$ denotes the inclusion. Moreover, $H$ can be assumed to be trivial on the complement of a compact subset $K$ of $\mathcal{R}_U$. 
 
  Then $H_1:\mathcal{R}_U \to \mathcal{R}_U$ is a self-diffeomorphism of $\mathcal{R}_U$ satisfying $H_1 \circ \phi|_{D} = \nu $ and $H_1|_{\mathcal{R}_U\setminus K}=id$. The composition $H_1 \circ \phi$ defines a self-diffeomorphism of $\mathcal{R}_U$ which extends the diffeomorphism at infinity $\phi$ and restricts to the identity on the disk $D$ used to perform the connected sum. Call $H_1 \circ \phi =: \Phi$.
 By construction, $\Phi|_{\partial D}$= id; hence we can extend $\Phi$ via the identity over $M \setminus \mathring{D'}$, obtaining a self-diffeomorphism of $M \setminus p $, call it $\Phi \# id $, that extends $\phi$, the diffeomorphism at infinity of $M \setminus p$ (see Figure \ref{Universalcase}).
 This concludes the proof.
\begin{figure}[htb]
\centering
\begin{tikzpicture}
%\draw[step=1cm,color=gray] (0,0) grid (14,10);%Uncomment this to get some helpful grid lines
\node[anchor=south west,inner sep=0] at (0,0){\includegraphics[width=14cm]{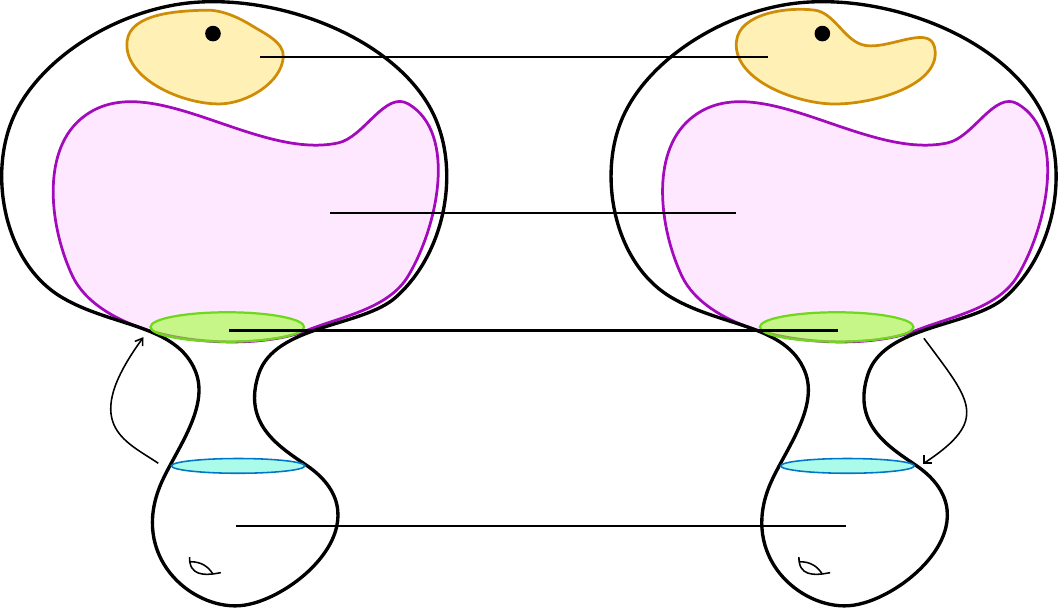}};
\node at (1.2,1.2) {$M \setminus \mathring{D'}$};
\node at (13.4,1.2) {$M \setminus \mathring{D'}$};
\node at (13.3,2.6){$\psi^{-1}$};
\node at (3,4.9) {$\textcolor{violet}{K}$};
\node at (11,4.9) {$\textcolor{violet}{K}$};
\node at (7.1,5.6) {$\textcolor{violet}{\Phi|_K}$};
\node at (7.1,1.4) {$id$};
\node at (7.1,4) {$\textcolor{Green}{\Phi|_{D}= id }$}; 
\node at (7.1,7.6) {$\textcolor{orange}{\Phi|_{T}=\phi}$};
\node at (1.2,2.6){$\psi$};
\node at (2.3, 7.3) {$\textcolor{orange}{T}$};
\node at (11, 7.2) {$\textcolor{orange}{\phi(T)}$};
\node at (2.8,8.4) {$\mathcal{R}_U\setminus \mathring{D}$};
\node at (11.1,8.4) {$\mathcal{R}_U\setminus \mathring{D}$};
\end{tikzpicture}
\caption{How the self-diffeomorphism $\Phi \# id$ of $M\#\mathcal{R}_U$ is constructed. The map $\Phi \# id $ embeds $T$ a closed neighborhood of infinity as $\phi(T)$. Inside the compact region $K$ we have $\Phi = H_1 \circ \phi$, with the property that $\Phi|_{D}=id$. Since the composition $\psi^{-1} \circ \Phi|_{\partial D} \circ \psi =id $, we can extend $\Phi$ via the identity over $M \setminus \mathring{D'}$.}
\label{Universalcase}
\end{figure}
\end{proof}
Now we can prove the general version of this result. As in the proof of Theorem \ref{new}, most of the techinques we will use to prove Theorem \ref{new1} come from the proof of Lemma \ref{theorem3}.

In the statement we use the term \emph{universal behavior}. This indicates that the smoothings of $M \setminus S$ satisfy a property similar to the one of $\mathcal{R}_U$ with respect to diffeotopy groups. Lemma \ref{theorem3} is analogous to Lemma \ref{Gompf2}. This is because both $M \setminus p$ and $\mathbb{R}^4$ have a single end. Under the assumptions of Lemma \ref{theorem3}, the diffeotopy group at infinity $\mathcal{D}^{\infty}(M \setminus p)$ coincides with the diffeotopy group $\mathcal{D}^{\epsilon_p}(M \setminus p)$ of the end at $p$. Moreover, the map $r$ coincides with $r_{\epsilon_p}$. When dealing with multiple punctures, and especially non-compact manifolds,  \emph{universal behavior} has a slightly different meaning which we now specify.

\begin{theorem}\label{new1}
Let $M$ be a topological 4-manifold, $S \subsetneq \mathring{M}$ a non-empty discrete set of points. 
\begin{enumerate}
    \item If $M$ is smoothable, then $M \setminus S$ admits a smoothing for which the map $r_{\epsilon_p}$ is surjective for each $p \in S$.
    \item If $M$ is non-smoothable and $|S| \ge 2$, then $M \setminus S$ admits a smoothing for which $r_{\epsilon_p}$ is surjective for all but one $p \in S$.
\end{enumerate}
\end{theorem}
In both cases we say that the smoothing has \emph{universal behaviour} (this does implicitly depend on the pair $(M,S)$, but we suppress this from the notation). Note that this does not impose any condition on other ends of $M \setminus S$.
\begin{proof}

We already covered case (1) when $|S|=1$ and $M$ is compact in Lemma \ref{theorem3}. To prove Theorem \ref{new1}, it suffices to prove case (1) for $M$ non-compact.  Case (2) follows from (1) applied to $M' := M \setminus p$ and $S' := S \setminus p$, where $p \in S$, again using Quinn's result \cite{endsofmaps}.

As in the proof of Theorem \ref{new}, to prove (1) it suffices to show the result is true for a non-compact 4-manifold $M$ and $|S|=1$. Once we have this, we will show how to obtain the general result. The compact case is analogous.
\begin{claim}
Let $M$ be a non-compact 4-manifold and $p \in \mathring{M}$. Then there is a smoothing of $M\setminus p$ with universal behavior.
\end{claim}

\begin{proof}[Proof of the claim]

The proof is analogous to that of Lemma \ref{theorem3}. Note that $\partial D'$ separates $\epsilon_p$ from all the other ends of the non-compact $M$. 
\end{proof}

Using the claim, we can prove (1). Suppose $S$ is discrete and let $\Lambda$ denote the index set. The set $\Lambda$ must be countable otherwise $S$ would not be discrete, so we can identify it with a subset of $\mathbb{Z}^+$. 

Pick $p_1 \in S$. Using the claim, we obtain a smoothing of $M \setminus p_1$ for which the map $r_{\epsilon_{p_1}}$ is surjective. Fix the smoothing and call $M'$ the resulting smooth manifold. Call $S':= S \setminus p_1$. Take $p_2 \in S$. Applying the claim to $M'$ and $p_2$, we get a smoothing of $M' \setminus p_2$ for which the map $r_{\epsilon_{p_2}}$ is surjective. Since $S$ is discrete, we can choose the disk $\mathring{D'}$ away from $p_1$ to perform the connected sum with $U$. Then the smooth structure is not changed near the puncture $p_1$. Hence $r_{\epsilon_{p_1}}$ remains surjective.

Repeat the procedure for each index $i \in \Lambda$. In the end we get a smoothing of $M \setminus S$ for which the maps $r_{\epsilon_{p_i}}$ are surjective for each $i \in \Lambda$. Thus, we have a smoothing of $M \setminus S$ with \emph{universal behavior}. 
As mentioned, we can prove the case $M$ compact, smoothable and $|S| \ge 2$ analogously.

To prove case (2), apply (1) to $M':= M \setminus p$ and $S':= S \setminus p$ using Quinn's \emph{smoothablity theorem} \cite{endsofmaps}. Keep in mind that this step causes loss of information about the end $\epsilon_p$. We obtain a smoothing of $M \setminus S$ for which the maps $r_{\epsilon_{p_i}}$ are surjective for $p_i \neq p$. However, we do not  know whether the same holds for the map $r_{\epsilon_p}$.
\end{proof}

\clearpage
\printbibliography[
heading=bibintoc,
title={References}
]

\end{document}